\numberwithin{equation}{section}
\def\cf{{\mathcal F}}
\def\ch{{\mathcal H}}
\def\ga{{\mathfrak A}} 
\def\gb{{\mathfrak B}}
\def\gh{{\mathfrak H}}
\def\gar{{\mathfrak R}}
\def\bn{{\mathbb N}}
\def\a{\alpha}
\def\d{\delta}  
\def\eps{\varepsilon}
\def\s{\sigma} 
\def\f{\varphi}  
\def\om{\omega} \def\Om{\Omega}
\newtheorem{thm}{Theorem}[section]
\newtheorem{lem}[thm]{Lemma}
\newtheorem{cor}[thm]{Corollary}
\newtheorem{prop}[thm]{Proposition}
\theoremstyle{definition}
\def\di{{\rm d}}
\newcommand{\nn}{\nonumber}
\begin{document}

\title[From discrete to continuous monotone $C^*$-algebras]
{From discrete to continuous monotone $C^*$-algebras via quantum central limit theorems}
\author{Vitonofrio Crismale}
\address{Vitonofrio Crismale\\
Dipartimento di Matematica\\
Universit\`{a} degli studi di Bari\\
Via E. Orabona, 4, 70125 Bari, Italy}
\email{\texttt{vitonofrio.crismale@uniba.it}}
\author{Francesco Fidaleo}
\address{Francesco Fidaleo\\
Dipartimento di Matematica\\
Universit\`{a} degli studi di Roma Tor Vergata\\
Via della Ricerca Scientifica 1, Roma 00133, Italy} \email{{\tt
fidaleo@mat.uniroma2.it}}
\author{Yun Gang Lu}
\address{Yun Gang Lu\\
Dipartimento di Matematica\\
Universit\`{a} degli studi di Bari\\
Via E. Orabona, 4, 70125 Bari, Italy}
\email{\texttt{yungang.lu@uniba.it}}
\date{\today}

\begin{abstract}

\vskip0.1cm\noindent We prove that all finite joint distributions of creation and annihilation operators in Monotone and anti-Monotone Fock spaces
can be realised as Quantum Central Limit of certain operators on a $C^*$-algebra, at least when the test functions are Riemann integrable. Namely, the
approximation is given by weighted sequences of creators and annihilators in discrete monotone $C^*$-algebras, the weight being the above cited test functions.
The construction is then generalised to processes by an invariance principle. \\

\bigskip

\noindent {\bf Mathematics Subject Classification}: 60F05, 46L53, 60B99.
\\
{\bf Key words}: Central Limit Theorems; Noncommutative Probability; $C^*$-algebras; Monotone and Anti-Monotone commutation relations.
\end{abstract}

\maketitle

\section{introduction}

Monotone Fock spaces firstly appeared in the last years of the 1990's in the papers of Lu \cite{Lu}, and Muraki \cite{Mur2}. The investigation on the convergence of sums of the so called self-adjoint position operators on such spaces can be traced out in \cite{Mur}. There, it was defined the so-called monotone independence for random variables in $C^*$- algebraic probability spaces and proved a Central Limit Theorem (CLT for short). Namely, normalised sums of monotonically independent and identically distributed self-adjoint operators
in $C^*$-algebraic probability spaces weakly converge to the arcsine distribution. This result has a direct concrete application as one finds that position operators, seen as self-adjoint random variables in the discrete Monotone $C^*$-algebra, realise the monotone independence.

On the other hand, the study of the asymptotic behaviour of sums of operators (random variables) realising the monotone commutation rules \cite{Boz,{CFL}} sometimes needs the two sums of monotone creators and annihilators
$$
S_{N}^{1}:=\frac{1}{\sqrt{N}}\sum_{i=1}^N a^\dag_i,\,\,\,\,\, S_{N}^{-1}:=\frac{1}{\sqrt{N}}\sum_{i=1}^N a_i,
$$
have to be handled separately. This means that one looks for the limit of the so-called mixed moments, i.e.
$$
\lim_{N\rightarrow+\infty} \f\big(S_{N}^{\eps(1)}\cdots S_{N}^{\eps(m)}\big),\,\,\,\, m\in\mathbb{N},\,\,\,\, \eps(1),\ldots, \eps(m)\in\{-1,1\}
$$
in some $C^*$-algebraic probability space $(\ga,\f)$, $\ga$ being a $C^*$-algebra and $\f$ a state on it. The monotone central limit results above recalled do not cover this case, which it has been already performed in many relevant situations in Quantum Probability under the name of Quantum (or noncommutative) CLT. As an example one can look at \cite{Spe} for the case of $q$-deformed random variables, or at \cite{C} for processes coming from the so-called
$1$-mode type interacting Fock spaces (IFS for short). Furthermore, we mention that in \cite{AHO} such a Quantum CLT was carried on
also for the Haagerup states on the group $C^*$-algebra of the Free Group  with countably many generators by using the so-called singleton condition (see e.g. \cite{ABCL} for details).

It is our aim to deal with this problem in the monotone setting. Indeed, we firstly take $\ga$ a $C^*$-algebra generated by objects concretely realised as creators and annihilators
in the Monotone Fock space over $L^2(J)$, $J$ being a Lebesgue measurable part of $\mathbb{R}$. Then, we consider the $C^*$-algebraic probability space $(\ga, \om_\Om)$, $\Om$ being Fock vacuum
and $\om_\Om$ the relative vector state, together with the vacuum moments of creation and annihilation operators in $(\ga, \om_\Om)$.
More precisely, for $m\in\mathbb{N}$, $f_1,\ldots, f_m\in L^2(J)$,
$\eps(1),\ldots, \eps(m)\in \{-1,1\}$, we look at the collection of the moments $\om_\Om\big(a^{\eps(1)}(f_1)\cdots a^{\eps(m)}(f_m)\big)$, where $a^{-1}(f):=a(f)$, $a^1(f):=a^\dag(f)$ and investigate whether they can arise from a Quantum CLT. Namely, we ask if it is possible to find another $C^*$-algebraic
probability space related in some way to the monotone commutation relations, say $(\gb, \f)$, and some $b_i\in\gb$ with $b^{-1}_i:=b_i$, $b^1_i:=b^\dag_i$, s.t., for any $m\in\mathbb{N}$,
$$
\lim_N \f\big(S_N^{\eps(1)}\cdots S_N^{\eps(m)}\big)=\om_\Om\big(a^{\eps(1)}(f_1)\cdots a^{\eps(m)}(f_m)\big),
$$
where $S_N^{\eps(i)}:=\frac{1}{\sqrt{N}}\sum_{k=1}^N b_k^{\eps(i)}$, $\eps(i)\in\{-1,1\}$. The $C^*$-algebra $\gb$ could be actually obtained by factoring out the free product $C^*$-algebra by the ideal generated by a "concrete" commutator, according to the results established in \cite{CrFid,CrFid2}.

We show that this program can be carried out at least when $J:=[0,1]$ and the test functions $f_i$'s are Riemann integrable maps. The approximating sequence involves sums of creation and annihilation operators generating the concrete unital $C^*$-algebra of the discrete Monotone Commutation Relations, which thus gives $\gb$.
As the limit above contains more informations than those included in the convergence of sums of self-adjoint operators in $\gb$, our result is more general than the CLT in \cite{Mur}, and it seems naturally addressed for applications in Quantum Physics and Applied Mathematics, above all in Quantum Information and Computing.

The paper is organised as follows. In Section \ref{sec2}, we review definitions and some features concerning discrete and continuous Monotone Fock spaces, as well
as the corresponding Monotone $C^*$-algebras. Section \ref{sec3a} is devoted to the main result, i.e. the CLT and is further equipped with some technical results necessary to reach its proof. In the final part, we outline how to achieve a functional
counterpart (Donsker's invariance principle, see \cite{Do}) of the above cited theorem. Notice that the invariance principle in particular entails the weak convergence of position and momentum processes on the Monotone $C^*$-algebra, to the so-called Monotone Brownian motion \cite{Mur2}. In Section \ref{sec4}, we finally highlight the anti-Monotone case obtaining the same kind of results as the Monotone one.

\section{preliminary tools}
\label{sec2}

The first part of the section is directed to recall some useful features related to the monotone discrete and continuous Fock spaces, as well as the corresponding annihilator and creator operators. The Fock spaces we are dealing with are based on the one particle spaces $\ell^2(\bn)=L^2(\bn,\di n)$ (where $\di n$ is the counting measure on $\bn$), and $L^2([0,1],\di t)$ (where $\di t$ is the Lebesgue measure) respectively. The reader is referred to \cite{CFL, Lu, Mur2} for further details.

In the last lines we report
some notations and facts about partitions on a set, necessary for the development of Section \ref{sec3a}.

Fix $k\geq 1$ and denote $I_k:=\{(i_1,i_2,\ldots,i_k)\subset\bn \mid i_1< i_2 < \cdots <i_k\}$. When $k=0$, we take
$I_0:=\{\emptyset\}$, $\emptyset$ being the empty sequence. For each $k$, $\ch_k:=\ell^2(I_k)$ is the Hilbert space giving the $k$-particles
space. The $0$-particle space $\ch_0=\ell^2(\emptyset)$ is identified with the complex scalar field $\mathbb{C}$.
The discrete monotone Fock space is then defined as $\cf_m:=\bigoplus_{k=0}^{\infty} \ch_k$.

Given any increasing sequence $\a=(i_1,i_2,\ldots,i_k)$ with arbitrary length of natural numbers, we denote by $e_{\a}$ the generic element of
the canonical basis of $\cf_m$. The monotone creation and annihilation operators are respectively given, for any $i\in \mathbb{N}$, by
\begin{align*}
&a^\dagger_ie_{\a}:=\left\{
\begin{array}{ll}
e_{(i,i_1,i_2,\ldots,i_k)} & \text{if}\, i< i_1\,, \\
0 & \text{otherwise}\,, \\
\end{array}
\right. \\
&a_i e_{\a}:=\left\{
\begin{array}{ll}
e_{(i_2,\ldots,i_k)} & \text{if}\, k\geq 1\,\,\,\,\,\, \text{and}\,\,\,\,\,\, i=i_1,\\
0 & \text{otherwise}\,, \\
\end{array}
\right.
\end{align*}
where, as usual, $a_i:=a(e_i)$, $a^\dag_i:=a^\dag(e_i)$. One can check that $\|a^\dagger_i\|=\|a_i\|=1$ (see e.g. \cite{Boz}, Proposition 8).
Moreover, $a^\dagger_i$ and $a_i$ are mutually adjoint and satisfy the following relations
\begin{equation}
\label{comrul}
\begin{array}{ll}
  a^\dagger_ia^\dagger_j=a_ja_i=0 & \text{if}\,\, i\geq j\,, \\
  a_ia^\dagger_j=0 & \text{if}\,\, i\neq j\,.
\end{array}
\end{equation}
In addition, the following commutation relation
\begin{equation*}
a_ia^\dagger_i=I-\sum_{k=0}^ia^\dagger_k a_k,\quad i\in\bn
\end{equation*}
holds true.

By means of the monotone creation and annihilation operators, one constructs the Monotone $C^*$-algebra $\gar_m$ as the unital
$C^*$-algebra generated $\{a_i\mid i\in\mathbb{N}\}$. Its structure was investigated in \cite{CFL} where the reader
is referred for details.

The incoming part is devoted to recall definitions and some features of continuous Monotone Fock Space.
They allow us to prove several technical statements which, together with some results dealing with discrete monotone Fock space,
are crucial to prove the main result of the following section, that is a Quantum CLT.

Let $J\subseteq \mathbb{R}$ be a Lebesgue measurable set. For any $n\in\mathbb{N}$, $J^n_+$ denote the simplex made by the sequences
$(t_1<t_2<\cdots <t_n)$ of length $n$ of elements in $J$. As usual, we denote $J^0_+=\{\emptyset\}$.
If $\mu_n\equiv\di^n{\bf t}$ is the Lebesgue measure on $\mathbb{R}^n$ for any $n\geq 1$, (hereafter we use the bold as the shorthand notation for vectors), let
$\gh_n$ be the complex Hilbert space $L^2(J^n_+,\mu_n)$. After taking $\mu_0$ as the Dirac unit mass on
$\emptyset$ and $\gh_0:=L^2(J^0_+,\mu_0)=\mathbb{C}\Om$, $\Om$ being the vacuum vector, the continuous Monotone Fock space
is then achieved as $\Gamma_M=\bigoplus_{n=0}^\infty \gh_n$.
The inner product, linear in the second variable, is defined as
$$
\langle f, g \rangle_n=\d_{m,n}\int_{J^n_+}\overline{f(t_1,\ldots,t_n)}g(t_1,\ldots,t_n)\di^n{\bf t},\quad f\in\gh_m,\,g\in\gh_n
$$

The creation operator with test function $f\in \gh$ is defined, for $g\in \gh_n$, $(t_1,t_2,\ldots,t_n)\in J^n_+$, $n\in\mathbb{N}$, $t\in J$, as
\begin{equation*}
[a^\dag(f)g](t,t_1,\ldots t_n):=\left\{
\begin{array}{ll}
f(t)g(t_1,\cdots t_n) & \text{if}\,\, t< t_1, \\
0 & \text{otherwise}\,. \\
\end{array}
\right.
\end{equation*}
Since $\|a^\dag(f)\|\leq \|f\|$ on any $n$-th particles space, this operator can be linearly extended on the whole $\Gamma_M$ by density.

The annihilation
operator with test function $f\in\gh$ is the bounded extension on $\Gamma_M$ of
\begin{equation}
\label{annc}
[a(f)g](t_1,\ldots t_n):=\left\{
\begin{array}{ll}
\int_{t<t_1}\overline{f(t)}g(t,t_1,\cdots t_n)\di t & \text{if}\,\,\, n\geq 1\,, \\
0 & \text{if}\,\,\, n=0\,, \\
\end{array}
\right.
\end{equation}
$g$ being a function in $\gh_{n+1}$ and, as above, $(t_1,t_2,\ldots,t_n)\in J^n_+$, $n\in\mathbb{N}$, $t\in J$.
From \eqref{annc}, it kills the vacuum $\Om$ and one can check $a(f)^*=a^\dag(f)$.
W also note that the construction of continuous monotone Fock space can be obtained as an IFS, see e.g. \cite{Lu2}.

In the last lines of the section, we report some features bridging partitions of a set and sequences of creation and annihilation operators in general IFS.

Let $S$ be a non empty linearly ordered finite set, and
$\s$ a partition of $S$. Namely, $\s=\{V_1,\ldots, V_p\}$ with
$$
V_i\cap V_j=\d_{ij} V_j\,,\quad \cup_{i=1}^pV_i=S\,,
$$
where the $V_h$ are called blocks of the partition $\s$. A partition $\s$ is called crossing if it contains at least two distinct blocks $V_i$ and $V_j$, and elements $v_1,v_2\in V_i$, $w_1,w_2\in V_j$ s.t. $v_1<w_1<v_2<w_2$. Otherwise, it is called non crossing. It is called a pair partition if each block $V_h$ contains exactly two elements. In this case, for any $h$ we write $V_h=(l_h,r_h)$, where $l_h<r_h$, $l_1<l_2<\ldots< l_{|S|/2}$ and $|S|$ is the necessarily even number of elements in $S$.

In what it follows, we typically take $S=\{1,\ldots,m\}$ and denote the set of partitions on it as $P(m)$. Once $m$ is even, say $m=2n$, then $PP(2n)$ and $NCPP(2n)$ will denote the sets of pair partitions and non crossing pair partitions, respectively. As from above, each $\s\in PP(2n)$ can be simply denoted by $(l_h,r_h)_{h=1}^n$. One says that $(l_h,r_h)_{h=1}^n\in NCPP(2n)$ is connected if $l_1=1$ and $r_1=2n$. The cardinalities of $PP(2n)$ and $NCPP(2n)$ are $(2n!)/(2^nn!)$ \cite{An}, and the $n$th Catalan number $\frac{1}{n+1}\binom{2n}{n}$ \cite{Kr}, respectively.

Partitions are a powerful tool when one has to compute mixed moments of creation-annihilation operators w.r.t. the vacuum in general Fock spaces, as we are going to see.
To this aim, take a the Hilbert space $\gh$ and consider a IFS on it (see \cite{CrLu}). It is useful to denote creation and annihilation operators as
$a^1(f):=a^\dag(f)$ and $a^{-1}(f):=a(f)$, for $f\in\gh$, respectively. If $m\geq 1$,
monomials like $a^{\varepsilon(1)}(f_1)\cdots a^{\varepsilon(m)}(f_m)$, where $\eps(1),\ldots, \eps(m)\in\{-1,1\}$ will often be considered for computing joint laws or Central Limit results. In particular, one finds that for the vector state $\om_\Om(\cdot):= \langle \Om, \cdot \Om\rangle$ on the Fock space, the vacuum expectation
\begin{equation}
\label{vacex}
\om_\Om\big(a^{\varepsilon(1)}(f_1)\cdots a^{\varepsilon(m)}(f_m)\big):=\big\langle\Om,a^{\varepsilon(1)}(f_1)\cdots a^{\varepsilon(m)}(f_m)\Om\big\rangle
\end{equation}
is null when $m$ is odd. If instead $m=2n$, both the following conditions necessarily hold for the non vanishing of \eqref{vacex}:
\begin{enumerate}
  \item $\sum_{k=1}^{2n} \varepsilon(k)=0$
  \item $\sum_{j=k}^{2n} \varepsilon(j)\geq 0$, for $k=1,\ldots, 2n$.
\end{enumerate}
This means that the sequences of $\eps$ realising (1) and (2) above give rise to the so-called Dyck words of length $2n$ on the binary alphabet $\{-1,1\}$ \cite{Du}.
As the set giving the totality of strings $\eps$ of type $\{\eps(1),\ldots, \eps(2n)\}$ is naturally identified with $\{-1,1\}^{2n}$, by $\{-1,1\}^{2n}_+$ we denote the family of those $\eps$ representing a Dyck word.

Notice that for $\eps\in\{-1,1\}^{2n}$, one finds $p\in \mathbb{N}$ and a strictly increasing sequence $0\leq l_1< \ldots < l_p\leq 2n$ s.t. $\eps(l_j)=-1$ for any $j$. For the special case $\eps\in\{-1,1\}^{2n}_+$, conditions (1) and (2) above immediately entail $p=n$, $l_1=1$ and $l_n<2n$. In addition, to each $\eps\in\{-1,1\}^{2n}_+$ one can uniquely associate a non crossing pair partition of the set consisting of $2n$ elements. Namely, the first block is obtained just pairing the first consecutive $(-1,1)$ appearing on the string starting from the left. The second pairing will arise by cancelling the two indices previously paired and reproducing the previous scheme to the remaining ones, and so on. For the convenience of the reader, in a table we arrange the situation for $2n=6$.

\vskip1cm
\begin{center}
\begin{tabular}{||c|c||}
  \hline
  \,\,\, Dyck word \,\,\, & \,\,\, pair partition \,\,\,\\
  \hline
  \hline
  \,\,\, $- - - + + +$ \,\,\, & \,\,\,  (34) (25) (16)  \,\,\,\\
  \hline
  \,\,\, $- - + - + +$ \,\,\, & \,\,\, (23) (45) (16) \,\,\,\\
  \hline

   \,\,\, $- + - - + +$ \,\,\, & \,\,\, (12) (45) (36) \,\,\,\\
\hline

\,\,\, $- - + + - +$ \,\,\, & \,\,\,  (23) (14) (56) \,\,\,\\
\hline

\,\,\, $- + - + -+$ \,\,\, & \,\,\,  (12) (34) (56) \,\,\,\\
\hline

\end{tabular}
\end{center}

\vskip1cm

\noindent Thus, a one-to-one correspondence between $\{-1,1\}^{2n}_+$ and $NCPP(2n)$ is realised, and one uses the natural identification $\eps\equiv (l_h,r_h)_{h=1}^n$.

The following notations describe some objects we will use in the successive sections.
For $n,N\in \mathbb{N}$ with $1\leq n\leq N$, we take
$$
\mathfrak{M}_p(2n,N):=\{k:\{1,\ldots,2n\}\rightarrow \{1,\ldots N\} \mid |k^{-1}(j)|=2, j\in R(k)\}
$$
as the set of all $2$-$1$ maps with range $R(k)$ included in $\{1,\ldots N\}$. If further $(l_h,r_h)_{h=1}^n$ is a pair partition on $\{1,\ldots, 2n\}$, by $\mathfrak{M}_p((l_h,r_h)_{h=1}^n,N)$ we denote the collection of $k$ in $\mathfrak{M}_p(2n,N)$ s.t. $k(l_h)=k(r_h)$ for any $h$.

Very often in the sequel, the generic $k(l)$ will be simply denoted as $k_l$ without further mention. We also denote
\begin{align*}
 \mathfrak{M}_p(2n):=&\bigcup_{N\in\bn}\mathfrak{M}_p(2n,N)\,,\\
 \mathfrak{M}_p((l_h,r_h)_{h=1}^n):=&\bigcup_{N\in\bn}\mathfrak{M}_p((l_h,r_h)_{h=1}^n,N)\,.
 \end{align*}

\section{a quantum central limit theorem}
\label{sec3a}

Achieving the main theorem of the section needs some technical results dealing with continuous Monotone Fock Spaces and Monotone $C^*$-algebras,
which we are going to present. For such a purpose, from now on we put $J\equiv[0,1]$.

The next results amount to the computation of vacuum mixed moments for creation and annihilation operators
acting on the Monotone Fock space $\Gamma_M$. Moreover, Lemmata \ref{lemm} and \ref{tocon} are borrowed from \cite{Lu}, Lemmata 3.1 and 3.2 and here reported for the reader's convenience.
\begin{lem}
\label{lemm}
For each $n\in\mathbb{N}$, $t\in[0,1]$, $f\in\gh$ and $G\in\gh_{n+1}$, one has
\begin{equation*}
\begin{split}
[a(f)G](t_1,\ldots t_n)&=\int_0^1\overline{f(t)}G(t,t_1,\cdots t_n)\chi_{[0,t_1)}(t)\di t \\
&=\int_0^1\overline{f(t)}G(t,t_1,\cdots t_n)\chi_{(t,1]}(t_1)\di t.
\end{split}
\end{equation*}
If in particular $G=g\prod_{k=1}^ng_k$ and $1\leq m\leq n$,
\begin{align*}
a(f)&a^\dag(g)a^\dag(g_1)\cdots a^\dag(g_n)\Om
=\int_0^1\di t\overline{f(t)}g(t)a^\dag(g_1\chi_{(t,1]})\cdots a^\dag(g_n)\Om \nn\\
=&\int_0^1\di t\overline{f(t)}g(t)a^\dag(g_1\chi_{(t,1]})\cdots a^\dag(g_m\chi_{(t,1]})a^\dag(g_{m+1})\cdots a^\dag(g_n)\Om\\
=&\int_0^1\di t\overline{f(t)}g(t)a^\dag(g_1\chi_{(t,1]})\cdots a^\dag(g_n\chi_{(t,1]})\Om\,.\nn
\end{align*}
\end{lem}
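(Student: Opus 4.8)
The plan is to reduce everything to the two defining formulas — the one for $a^\dag(f)$ and formula \eqref{annc} for $a(f)$ — and to argue pointwise on the simplices $J^n_+$, so that the ``integrals of operators applied to $\Om$'' appearing in the statement are simply read as the $\gh_n$-valued functions $(t_1,\dots,t_n)\mapsto\int_0^1(\cdots)\,\di t$ and no Bochner-integral subtlety is needed.

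First I would dispatch the opening chain of equalities. By \eqref{annc}, for $G\in\gh_{n+1}$ and $(t_1,\dots,t_n)\in J^n_+$ one has $[a(f)G](t_1,\dots,t_n)=\int_{t<t_1}\overline{f(t)}\,G(t,t_1,\dots,t_n)\,\di t$; since $t,t_1\in[0,1]$, the constraint $t<t_1$ is expressed equally well by the factor $\chi_{[0,t_1)}(t)$ or by the factor $\chi_{(t,1]}(t_1)$, which yields both asserted forms.

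For the second part I would first make explicit the vector to which $a(f)$ is applied. Starting from $a^\dag(g_n)\Om=g_n\in\gh_1$ and iterating the definition of $a^\dag$, a straightforward induction gives
\[
\big[a^\dag(g)a^\dag(g_1)\cdots a^\dag(g_n)\Om\big](t_0,t_1,\dots,t_n)=g(t_0)\prod_{k=1}^n g_k(t_k),\qquad (t_0,\dots,t_n)\in J^{n+1}_+,
\]
that is, this vector is exactly the $G=g\prod_{k=1}^n g_k$ of the statement. Feeding it into the first part,
\[
\big[a(f)G\big](t_1,\dots,t_n)=\int_0^1\overline{f(t)}\,g(t)\Big(\prod_{k=1}^n g_k(t_k)\Big)\chi_{(t,1]}(t_1)\,\di t.
\]
The remaining step is the combinatorial observation that on the ordered simplex $t_1\le t_2\le\cdots\le t_n$ one has $\chi_{(t,1]}(t_1)=\prod_{k=1}^m\chi_{(t,1]}(t_k)$ for every $m$ with $1\le m\le n$ (if $t<t_1$ then $t<t_k$ for all $k$, while conversely the product already contains the $k=1$ factor). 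Distributing $\chi_{(t,1]}$ over the first $m$, resp.\ over all $n$, coordinates and recognising $\prod_{k\le m}(g_k\chi_{(t,1]})(t_k)\cdot\prod_{k>m}g_k(t_k)$ as the value at $(t_1,\dots,t_n)$ of $a^\dag(g_1\chi_{(t,1]})\cdots a^\dag(g_m\chi_{(t,1]})a^\dag(g_{m+1})\cdots a^\dag(g_n)\Om$ — again via the explicit formula above — one reads off the three displayed identities, the last line being the case $m=n$.

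The only point deserving care, and thus the main (mild) obstacle, is the bookkeeping around the simplex ordering: one must check that inserting the truncation $\chi_{(t,1]}$ into the test functions of the first $m$ creators is consistent with the domain constraints $t_1<\cdots<t_n$ already carried by the $a^\dag$'s, and that for $m<n$ it does not alter the vector precisely because on $J^n_+$ the coordinates $t_{m+1},\dots,t_n$ automatically exceed $t$ once $t_1$ does. Together with the (routine) measurability in $t$ of $t\mapsto a^\dag(g_1\chi_{(t,1]})\cdots a^\dag(g_n\chi_{(t,1]})\Om$, which makes the right-hand sides well defined as $\gh_n$-valued functions, this completes the argument.
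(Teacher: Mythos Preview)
Your argument is correct. The paper does not actually prove this lemma in the monotone case --- it simply records the statement and cites \cite{Lu}, Lemma~3.1 --- so the only in-paper comparison point is the proof of the anti-monotone analogue, Lemma~\ref{antilemm}. There the first display is obtained by a duality computation (pairing $b(f)G$ against an arbitrary $F\in\gh_n^-$ and unwinding the indicator of the simplex as a product of one-step indicators), and the second display is dismissed as ``a particular case'' of the first. Your route is more direct: you read the first display straight off the definition \eqref{annc} --- which it literally is --- and for the second display you make explicit the one point the paper leaves implicit, namely that on $J^n_+$ the single constraint $t<t_1$ already forces $t<t_k$ for every $k$, so the truncation $\chi_{(t,1]}$ may be distributed over any initial segment of the creators without changing the vector. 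The duality argument buys a consistency check that $a(f)$ really is the adjoint of $a^\dag(f)$, but for the bare statement of the lemma your pointwise reading is shorter and perfectly adequate. (Minor quibble: the simplex uses strict inequalities $t_1<\cdots<t_n$, not $\le$; this has no effect on the argument.)
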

\begin{lem}
\label{tocon}
Let us take $n\in\mathbb{N}$, $\eps\equiv(l_h,r_h)_{h=1}^n$ be a non crossing connected pair partition on $\{1,\ldots,2n\}$, and $f_1,\ldots, f_{2n}\in\gh$. Then
\begin{align*}
\begin{split}
&\om_\Om\big(a^{\varepsilon(1)}(f_1)\cdots a^{\varepsilon(2n)}(f_{2n})\big) \\
=&\int_0^1 \om_\Om\big(a^{\varepsilon(2)}(f_2\chi_{[0,t)})\cdots a^{\varepsilon(2n-1)}(f_{2n-1}\chi_{[0,t)})\big)\overline{f_1(t)}f_{2n}(t)\di t
\end{split}
\end{align*}
\end{lem}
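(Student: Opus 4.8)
The plan is to prove Lemma \ref{tocon} by reducing the length-$2n$ moment to a length-$(2n-2)$ moment, pulling the outermost pair $(1,2n)$ out of the vacuum expectation via an explicit integration. Since $\eps\equiv(l_h,r_h)_{h=1}^n$ is connected, we have $l_1=1$ and $r_1=2n$, which forces $\eps(1)=-1$ and $\eps(2n)=1$; hence the monomial inside the expectation has the shape $a(f_1)\,a^{\eps(2)}(f_2)\cdots a^{\eps(2n-1)}(f_{2n-1})\,a^\dag(f_{2n})$.

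First I would expand $\om_\Om\big(a^{\eps(1)}(f_1)\cdots a^{\eps(2n)}(f_{2n})\big)=\big\langle a^\dag(f_1)\Om,\ a^{\eps(2)}(f_2)\cdots a^{\eps(2n-1)}(f_{2n-1})\,a^\dag(f_{2n})\Om\big\rangle$, recalling $a(f_1)^*=a^\dag(f_1)$. Now $a^\dag(f_1)\Om=f_1$, viewed as an element of $\gh_1=L^2(J^1_+)$, i.e.\ the function $t\mapsto f_1(t)$. The vector $a^{\eps(2)}(f_2)\cdots a^{\eps(2n-1)}(f_{2n-1})\,a^\dag(f_{2n})\Om$ also lies in $\gh_1$ because the string $\eps(2),\ldots,\eps(2n-1),\eps(2n)=(\eps(2),\ldots,\eps(2n-1),1)$ has total sum $-\eps(1)=1$ (by condition (1) applied to the full Dyck word) and the same for every suffix sum (by condition (2)), so the whole product applied to $\Om$ produces a one-particle vector. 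Call this vector $\Psi\in\gh_1$; then the moment equals $\int_0^1\overline{f_1(t)}\,\Psi(t)\,\di t$.

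The heart of the argument is the identification $\Psi(t)=f_{2n}(t)\,\om_\Om\big(a^{\eps(2)}(f_2\chi_{[0,t)})\cdots a^{\eps(2n-1)}(f_{2n-1}\chi_{[0,t)})\big)$. To get this I would track how the cutoffs $\chi_{[0,t)}$ are generated. Starting from $a^\dag(f_{2n})\Om=f_{2n}$ and applying the operators $a^{\eps(2n-1)}(f_{2n-1}),\ldots,a^{\eps(2)}(f_2)$ from the inside out, each creation operator prepends a variable with the ordering constraint $t<t_1$, and each annihilation operator integrates out the leading variable against $\chi_{(\cdot,1]}$ of the next one — precisely the mechanism exhibited in Lemma \ref{lemm}. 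Since $\Psi$ is one-particle, evaluating $\Psi$ at the single surviving variable $t$ means that every subsequent variable $t_j$ introduced by creators further inside satisfies $t<t_j$; equivalently, one may as well have replaced each test function $f_j$ ($2\le j\le 2n-1$) by $f_j\chi_{(t,1]}=f_j\chi_{[0,t)}^{\perp}$ restricted appropriately — carefully, by $f_j\chi_{(t,1]}$, which is what Lemma \ref{lemm} literally produces. After relabeling the dummy variables so that $t$ plays the role of the new vacuum-level cutoff, $\Psi(t)$ becomes $f_{2n}(t)$ times the vacuum expectation of the inner $(2n-2)$-fold product with every test function cut by $\chi_{[0,t)}$; substituting this into $\int_0^1\overline{f_1(t)}\Psi(t)\di t$ yields the claimed formula.

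The main obstacle is the bookkeeping in the previous step: making rigorous the claim that restricting the final one-particle vector to the argument $t$ is \emph{equivalent} to inserting the cutoffs $\chi_{[0,t)}$ (resp.\ $\chi_{(t,1]}$) on all the intermediate test functions, uniformly over which $\eps(j)$ are $\pm1$. I would handle this by induction on $n$, using Lemma \ref{lemm} as the base mechanism for a single creation followed by a single annihilation, and propagating the cutoff through the alternating suffixes; one must check that inserting $\chi_{(t,1]}$ commutes appropriately with the later creators and annihilators (it only ever shrinks supports in the direction already forced by the monotone ordering, so no moment is changed) and that the nesting of the $\chi$'s collapses to a single $\chi_{(t,1]}$ as in the last display of Lemma \ref{lemm}. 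Everything else — the adjoint identity, the one-particle-ness of $\Psi$, Fubini to exchange the outer $\di t$ integral with the inner vacuum expectation — is routine.
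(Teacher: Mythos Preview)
Your overall strategy---writing the moment as $\int_0^1\overline{f_1(t)}\Psi(t)\,\di t$ with $\Psi:=a^{\eps(2)}(f_2)\cdots a^{\eps(2n-1)}(f_{2n-1})a^\dag(f_{2n})\Om\in\gh_1$ and then identifying $\Psi(t)$---is sound and gives a more direct route than the paper's. The paper (see the anti-monotone analogue, Lemma~\ref{antitocon}, which is proved in full) runs an induction on $n$ from the \emph{inside}: it locates the innermost adjacent pair $(l_n,r_n)=(l_n,l_n+1)$, uses Lemma~\ref{lemm} to integrate that pair out (producing cutoffs $\chi_{(t,1]}$ on the creators to its right), applies the inductive hypothesis to the remaining $n-1$ blocks, and then re-enters Lemma~\ref{lemm} to collapse the two nested integrals via $\chi_{(s,1]}(t)=\chi_{[0,t)}(s)$. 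Your approach instead peels the \emph{outermost} pair in one step by tracking the surviving one-particle variable; this avoids the double application of Lemma~\ref{lemm} and the Fubini rearrangement, at the cost of having to justify the factorisation of $\Psi(t)$ directly.

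However, the explanation of that factorisation contains a direction error. In the monotone Fock space creators \emph{prepend} on the left with the \emph{smallest} variable, so when $\Psi\in\gh_1$ the surviving coordinate $t$ is the \emph{largest} one---namely the variable carried by $a^\dag(f_{2n})$---and every intermediate variable $t_j$ (for $2\le j\le 2n-1$) satisfies $t_j<t$, not $t<t_j$ as you wrote. Consequently the natural cutoff on the inner test functions is $\chi_{[0,t)}$, not $\chi_{(t,1]}$; the cutoff $\chi_{(t,1]}$ that Lemma~\ref{lemm} ``literally produces'' is indexed by the \emph{inner} integration variable, not by the outer one you are fixing. Your final formula for $\Psi(t)$ is correct, but the sentence deriving it contradicts itself (you pass from $\chi_{(t,1]}$ to $\chi_{[0,t)}$ by an unexplained ``relabeling''). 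The clean statement you want is: since annihilators remove the leftmost coordinate and creators add on the left, the rightmost coordinate---coming from $f_{2n}$---is never touched, so for each fixed $t$ one has $\Psi(t)=f_{2n}(t)\big\langle\Om,\,a^{\eps(2)}(f_2\chi_{[0,t)})\cdots a^{\eps(2n-1)}(f_{2n-1}\chi_{[0,t)})\Om\big\rangle$. This can be checked by a short induction on the length of the inner word (one step for a creator, one for an annihilator), without invoking Lemma~\ref{lemm} in the form you cite. Fix the inequality and the argument goes through.
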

\begin{prop}
\label{contmom}
Let $n\in\mathbb{N}$, $(l_h,r_h)_{h=1}^n\in NCPP(2n)$ and $f_1,\ldots, f_{2n}\in\gh$. Then
\begin{align}
\begin{split}
\label{mixmomme}
&\om_\Om\big(a^{\varepsilon(1)}(f_1)\cdots a^{\varepsilon(2n)}(f_{2n})\big) \\
=&\int_{J^n}\prod_{1\leq h<m\leq n}\Delta_{t_h,t_m}(r_h,r_m)
\prod_{h=1}^n\overline{f_{l_h}(t_h)}f_{r_h}(t_h)\di t_h\,,
\end{split}
\end{align}
where
$$
\Delta_{t_h,t_m}(r_h,r_m):=\d_{r_m)}(r_h)+\d_{r_h)}(r_m)\chi_{[0,t_h)}(t_m)
$$
and
$$
\d_{j)}(h):=\left\{\begin{array}{cc}
                          1 & \text{if}\,\,\,\, j>h\,, \\
                          0 & {\rm otherwise}\,.
                        \end{array}
                        \right.
$$
\end{prop}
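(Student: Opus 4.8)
The plan is to prove formula \eqref{mixmomme} by induction on $n$, using Lemmata \ref{lemm} and \ref{tocon} as the recursive engine and carefully tracking how the cut-off functions $\chi_{[0,t)}$ propagate through the nested creation operators. The base case $n=1$ is immediate: a single pair partition on $\{1,2\}$ forces $\eps=(-1,1)$, the product over $h<m$ is empty, and $\om_\Om\big(a(f_1)a^\dag(f_2)\big)=\int_0^1\overline{f_1(t)}f_2(t)\di t$ follows directly from \eqref{annc} applied to $a^\dag(f_2)\Om$.

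For the inductive step I would distinguish two cases according to whether the outermost pair partition $(l_h,r_h)_{h=1}^n$ is connected, i.e. whether $l_1=1$ and $r_1=2n$. If it is connected, Lemma \ref{tocon} applies verbatim and rewrites the moment as $\int_0^1\overline{f_1(t)}f_{2n}(t)\,\om_\Om\big(a^{\eps(2)}(f_2\chi_{[0,t)})\cdots a^{\eps(2n-1)}(f_{2n-1}\chi_{[0,t)})\big)\di t$; here the inner moment is governed by the non crossing pair partition obtained by deleting the outer block, which has $n-1$ pairs, and one applies the inductive hypothesis with each $f_{l_h},f_{r_h}$ replaced by $f_{l_h}\chi_{[0,t)},f_{r_h}\chi_{[0,t)}$. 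The effect of the extra cut-offs is precisely to introduce, for each surviving right-endpoint $r_h$, the factor $\chi_{[0,t)}(t_h)$, which by the definition of $\Delta$ is exactly the new $h=1$ contribution $\Delta_{t,t_h}(2n,r_h)=\chi_{[0,t)}(t_h)$ coming from pairing the relabelled indices against the outer block; so after relabelling and integrating, the product over $1\le h<m\le n$ is recovered. If instead the partition is not connected, it splits at the first return to $0$ as a concatenation of an interval-block structure on an initial segment and the remainder; here one uses that the monotone relations force the moment to factor — the sub-word on the initial segment involves only test functions disjoint (in the time variable) from those in the tail, because the relevant $\delta_{j)}$ factors vanish across the split — and one applies the inductive hypothesis to each factor separately, then checks that the product of the two $\Delta$-products equals the full $\Delta$-product since all cross terms $\Delta_{t_h,t_m}(r_h,r_m)$ with $h$ in the first segment and $m$ in the second reduce to $\delta_{r_m)}(r_h)+\delta_{r_h)}(r_m)\chi_{[0,t_h)}(t_m)$, and non crossingness plus the ordering of returns makes these evaluate consistently to $1$.

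The main obstacle, and the step I expect to require the most care, is the non-connected case: one must verify that the naive ``factorisation'' of the vacuum moment is legitimate in the monotone setting and that the bookkeeping of which $\Delta_{t_h,t_m}(r_h,r_m)$ factors are $\delta$'s versus $\chi$'s is consistent with the combinatorics of nesting versus sequencing of blocks. Concretely, one has to show that for a non crossing pair partition the relative position (nested inside, or sitting to the left of) of blocks $(l_h,r_h)$ and $(l_m,r_m)$ is faithfully encoded by the pair $\big(\delta_{r_m)}(r_h),\delta_{r_h)}(r_m)\big)$ together with the cut-off, and that Lemma \ref{lemm}'s propagation of $\chi_{(t,1]}$ through a run of creators matches this encoding after the change of variables identifying the inner simplex $J^{n-1}$ with the appropriate face of $J^n$. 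Once this dictionary between the recursive cut-off propagation and the symmetric product $\prod_{h<m}\Delta_{t_h,t_m}(r_h,r_m)$ is pinned down, the remaining manipulations — reindexing, pulling the $t$-integral outside, and matching the $\overline{f_{l_h}}f_{r_h}$ factors — are routine.
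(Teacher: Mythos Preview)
Your proposal is correct and follows essentially the same route as the paper: induction on $n$, splitting into the connected case (handled via Lemma \ref{tocon} plus the induction hypothesis with the $\chi_{[0,t)}$ cut-offs) and the non-connected case (handled by factorising the vacuum moment at $r_1=2j<2n$ and checking that the cross $\Delta$-factors equal $1$). One small correction: the factorisation in the non-connected case is not driven by any disjointness of test-function supports but is the standard factorisation rule for vacuum moments in interacting Fock spaces, which the paper simply cites from \cite{CrLu}; once you invoke that, the only remaining check is precisely the identity $\prod_{1\le h<m\le j}\Delta\cdot\prod_{j+1\le h<m\le n}\Delta=\prod_{1\le h<m\le n}\Delta$, which holds because $r_h<r_m$ forces $\delta_{r_m)}(r_h)=1$ whenever $h\le j<m$.
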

\begin{proof}
We firstly notice that \eqref{mixmomme} holds for $n=1$. Suppose further it is true for each $k<n$.
The assumption on $(l_h,r_h)_{h=1}^n$ gives $r_1=2j$ for some $j$, and the factorisation rule for vacuum mixed moments in interacting Fock spaces
(see e.g. pag. 215 in \cite{CrLu}) yields
\begin{align}
\begin{split}
\label{omom}
& \om_\Om\big(a^{\varepsilon(1)}(f_1)\cdots a^{\varepsilon(2n)}(f_{2n})\big) \\
=& \om_\Om\big(a^{\varepsilon(1)}(f_1)\cdots a^{\varepsilon(2j)}
(f_{2j})\big)\om_\Om\big(a^{\varepsilon(2j+1)}(f_{2j+1})\cdots a^{\varepsilon(2n)}(f_{2n})\big)\,.
\end{split}
\end{align}
We distinguish two cases, namely $j<n$ and $j=n$.

If $j<n$, one firstly notices
$$
\prod_{1\leq h<m\leq j}\Delta_{t_h,t_m}(r_h,r_m)\prod_{j+1\leq h<m\leq n}\Delta_{t_h,t_m}(r_h,r_m)=\prod_{1\leq h<m\leq n}\Delta_{t_h,t_m}(r_h,r_m)\,.
$$
Furthermore, as $(l_h,r_h)_{h=j+1}^n$ is a non crossing pair partition of $\{2j+1,\ldots,2n\}$, the induction assumption applied twice to the r.h.s. of \eqref{omom} gives the thesis.

If $j=n$, Lemma \ref{tocon} gives
\begin{align*}
&\om_\Om\big(a^{\varepsilon(1)}(f_1)\cdots a^{\varepsilon(2n)}(f_{2n})\big) \\
=&\int_0^1 \om_\Om\big(a^{\varepsilon(2)}(f_2\chi_{[0,t)})\cdots a^{\varepsilon(2n-1)}(f_{2n-1}\chi_{[0,t)})\big) \overline{f_1(t)}f_{2n}(t)\di t \\
=&\int_{J^{n}}\prod_{2\leq h<m\leq n}\Delta_{t_h,t_m}(r_h,r_m)
\bigg(\prod_{h=2}^n\overline{f_{l_h}(t_h)}(f_{r_h}\chi_{[0,t_1)})(t_h)\di t_h\bigg) \overline{f_1(t)}f_{2n}(t)\di t,
\end{align*}
where the last equality follows from the induction assumption since $\{(l_h,r_h)\}_{h=2}^{n}$ is non crossing.
Moreover, as $\d_{r_1)}(r_h)=1$ for $h=2,\ldots, n$, one has
\begin{equation*}
\prod_{h=2}^n\d_{r_1)}(r_h)\chi_{[0,t_1)}(t_h)\prod_{2\leq h<m\leq n}\Delta_{t_h,t_m}(r_h,r_m)=\prod_{1\leq h<m\leq n}\Delta_{t_h,t_m}(r_h,r_m)\,,
\end{equation*}
The thesis then follows.
\end{proof}
In our CLT, once fixing $\eps\in\{-1,1\}$, we will deal with the asymptotic behaviour of sums of type
$$
S_N^\eps(a,f):=\frac{1}{\sqrt{N}}
\sum_{k=1}^N a_{k}^\eps f\bigg(\frac{k}{N}\bigg),\,\,\, N\in\mathbb{N}
$$
for $f$ a bounded complex-valued Riemann integrable function on $[0,1]$, i.e. $f\in\mathcal{R}^\infty([0,1])$.
More in detail, we will look for the limit of the vacuum mixed moments of sums as above, i.e.
$$
\lim_{N\rightarrow\infty}\om_\Om\big(S_N^{\eps(1)}(a,f_1)\cdots S_N^{\eps(m)}(a,f_m)\big),
$$
for $m\in\mathbb{N}$, $\eps(1),\ldots,\eps(m)\in\{-1,1\}$ and $f_1, \ldots, f_m\in\mathcal{R}^\infty([0,1])$.
A simpler evaluation of the limit can be achieved after simplifying the computation of such vacuum expectations. The following results are aimed to this goal.
\begin{lem}
\label{ppf}
Let $f_1,\ldots, f_m\in\mathcal{R}^\infty([0,1])$, $m\in\mathbb{N}$. Then, for each $\eps(1),\ldots,\eps(m)\in\{-1,1\}$, one has that
$\om_\Om\big(S_N^{\eps(1)}(a,f_1)\cdots S_N^{\eps(m)}(a,f_m)\big)$
could not vanish only when $m=2n$, $\eps=(\eps(1),\ldots, \eps(2n))\in\{-1,1\}^{2n}_+$ and $k\in \mathfrak{M}_p((l_h,r_h)_{h=1}^n,N)$, $(l_h,r_h)_{h=1}^n$ being
the non crossing pair partition induced by $\eps$. In this case it is equal to
$$
\frac{1}{N^n}\sum_{k\in \mathfrak{M}_p((l_h,r_h)_{h=1}^n,N)}
\om_\Om\big(a^{\eps(1)}_{k_1}\ldots a^{\eps(2n)}_{k_{2n}}\big)\prod_{h=1}^n\overline{f_{l_h}\bigg(\frac{k_{r_h}}{N}\bigg)}f_{r_h}\bigg(\frac{k_{r_h}}{N}\bigg)
$$
\end{lem}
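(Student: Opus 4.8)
The plan is to unwind the normalised sums by multilinearity and then classify, string by string, which monomials $a^{\eps(1)}_{k_1}\cdots a^{\eps(m)}_{k_m}$ have non-zero vacuum expectation. Expanding each factor $S_N^{\eps(i)}(a,f_i)=\frac1{\sqrt N}\sum_{k_i=1}^N a_{k_i}^{\eps(i)}f_i(k_i/N)$ and using multilinearity of the product and of $\om_\Om$,
$$
\om_\Om\big(S_N^{\eps(1)}(a,f_1)\cdots S_N^{\eps(m)}(a,f_m)\big)=\frac1{N^{m/2}}\sum_{k_1,\dots,k_m=1}^N\Big(\prod_{i=1}^m c_i(k_i)\Big)\,\om_\Om\big(a^{\eps(1)}_{k_1}\cdots a^{\eps(m)}_{k_m}\big),
$$
where, recalling that $a^{-1}(\cdot)=a(\cdot)$ is conjugate-linear in its test function, $c_i(k_i)=f_i(k_i/N)$ if $\eps(i)=1$ and $c_i(k_i)=\overline{f_i(k_i/N)}$ if $\eps(i)=-1$. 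Everything is thus reduced to the combinatorics of the scalars $\om_\Om\big(a^{\eps(1)}_{k_1}\cdots a^{\eps(m)}_{k_m}\big)$.

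The first constraint is the Dyck-word one: each such monomial shifts the number operator by $\sum_i\eps(i)$ and can never take the particle number below $0$, so — exactly as in the general interacting-Fock-space discussion around \eqref{vacex} — the moment vanishes unless $m=2n$ and $\eps=(\eps(1),\dots,\eps(2n))$ satisfies conditions (1) and (2), i.e. $\eps\in\{-1,1\}^{2n}_+$. Fix such an $\eps$ and let $(l_h,r_h)_{h=1}^n\in NCPP(2n)$ be the non crossing pair partition it induces, so that $\eps(l_h)=-1$ and $\eps(r_h)=1$ for every $h$.

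The core step is to show that, for $\eps$ a Dyck word, $\om_\Om\big(a^{\eps(1)}_{k_1}\cdots a^{\eps(2n)}_{k_{2n}}\big)$ can be non-zero only when $k\in\mathfrak{M}_p((l_h,r_h)_{h=1}^n,N)$, i.e. $k_{l_h}=k_{r_h}$ for all $h$ with $k$ a $2$-$1$ map. I would argue by induction on $n$, following the factorisation exploited in the proof of Proposition \ref{contmom}: writing $r_1=2j$, the IFS factorisation of vacuum moments splits the expression as $\om_\Om\big(a^{\eps(1)}_{k_1}\cdots a^{\eps(2j)}_{k_{2j}}\big)\,\om_\Om\big(a^{\eps(2j+1)}_{k_{2j+1}}\cdots a^{\eps(2n)}_{k_{2n}}\big)$; the second factor is handled by the inductive hypothesis, while in the first — a connected Dyck word, so $\eps(1)=-1$ and $\eps(2j)=1$ — one strips the outermost annihilator together with the matching creator by means of Lemma \ref{lemm}, the relations \eqref{comrul} forcing $k_1=k_{2j}$ (an annihilator hitting a creator of a different label kills the vector) and leaving a monomial of length $2n-2$ on the interior indices to which the induction applies. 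This bookkeeping is the step I expect to be the main obstacle: one must check that removing the outer pair genuinely returns a shorter monomial of the same type, that the vanishing relations in \eqref{comrul} for equal or badly ordered labels impose no constraint beyond membership in $\mathfrak{M}_p$, and that the identifications $k_{l_h}=k_{r_h}$ propagate consistently both through the blocks nested inside $(l_1,r_1)$ and through those lying to its right.

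It remains to regroup the coefficients. On the surviving maps one has $k_{l_h}=k_{r_h}$ for every $h$, so the $2n$ scalars $\prod_{i=1}^{2n}c_i(k_i)$ factor into the $n$ products
$$
c_{l_h}(k_{l_h})\,c_{r_h}(k_{r_h})=\overline{f_{l_h}(k_{r_h}/N)}\,f_{r_h}(k_{r_h}/N),
$$
using $\eps(l_h)=-1$, $\eps(r_h)=1$ and $k_{l_h}=k_{r_h}$; together with $N^{m/2}=N^n$ this is precisely the asserted formula. Finiteness of the sum is automatic since $\|a_i\|=1$ and the $f_i$ are bounded, which is also what one needs for the passage to the limit in the main theorem.
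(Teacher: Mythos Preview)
Your overall strategy---expand by multilinearity, impose the Dyck-word constraint, then show the surviving index maps lie in $\mathfrak{M}_p((l_h,r_h)_{h=1}^n,N)$---is the same as the paper's. The divergence, and the gap, is in the key step.

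The paper does not argue by induction via IFS factorisation. It derives from \eqref{comrul} and Lemma~5.4 of \cite{CFL} two concrete four-operator identities, \eqref{equation1} and \eqref{equation2}: $a_ja^\dag_h a_k a^\dag_m=\d_{j,h}\d_{k,m}a_la^\dag_l$ with $l=\max\{j,k\}$, and $a_ja_ha^\dag_ka^\dag_m=\d_{h,k}\d_{j,m}\d_{j)}(h)a_ja^\dag_j$. A case analysis on whether $(l_h,r_h)_{h=1}^n$ is an interval partition, together with iteration into nested blocks, then forces $k_{l_h}=k_{r_h}$ everywhere by repeatedly collapsing adjacent $aa^\dag aa^\dag$ and $aaa^\dag a^\dag$ patterns.

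Your inductive route can be made to work, but not as written. Lemma~\ref{lemm} concerns the \emph{continuous} annihilator $a(f)$; its integral formula has no discrete analogue you can quote here. More seriously, \eqref{comrul} gives $a_ia^\dag_j=0$ only for \emph{adjacent} operators with $i\neq j$; in the connected case $a_{k_1}$ and $a^\dag_{k_{2j}}$ are separated by $2j-2$ letters, so \eqref{comrul} alone does not force $k_1=k_{2j}$. What you actually need is the structural fact that discrete monotone creators prepend and annihilators delete only at the front of a basis sequence, so the rightmost coordinate $k_{2j}$ of every intermediate vector $e_{(i_1,\dots,i_p,k_{2j})}$ is never touched; since the interior word is balanced, $a^{\eps(2)}_{k_2}\cdots a^{\eps(2j)}_{k_{2j}}\Om$ is either $0$ or $e_{k_{2j}}$, and then $a_{k_1}$ indeed forces $k_1=k_{2j}$. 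The recursion on the interior is also not literally the vacuum induction hypothesis: you are computing $\langle e_{k_{2j}}, a^{\eps(2)}_{k_2}\cdots a^{\eps(2j-1)}_{k_{2j-1}} e_{k_{2j}}\rangle$, which carries additional inequality constraints against $k_{2j}$ absent from $\om_\Om(\cdots)$. For the present lemma only the equalities $k_{l_h}=k_{r_h}$ matter and those do coincide, but this is exactly the bookkeeping you yourself flagged as the obstacle, and it has to be spelled out rather than attributed to Lemma~\ref{lemm} and \eqref{comrul}.
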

\begin{proof}
Take a sequence of $m$ complex-valued Riemann integrable functions on the unit interval $f_1,\ldots, f_m$ and $\eps(1),\ldots,\eps(m)\in\{-1,1\}$. Then
\begin{align*}
&\om_\Om\big(S_N^{\eps(1)}(a,f_1)\ldots S_N^{\eps(m)}(a,f_m)\big) \\
=& \frac{1}{N^{\frac{m}{2}}}\sum_{k_1,\ldots, k_m=1}^N \om_\Om\big(a^{\eps(1)}_{k_1}\cdots a^{\eps(m)}_{k_m}\big)\prod_{h=1}^m f^\#_h\bigg(\frac{k_h}{N}\bigg), \end{align*}
where
$$
f^\#_h(x):=\left\{\begin{array}{ll}
                      \overline{f_h(x)} & \text{if}\,\, \eps(h)=-1\,,\\
                      f_h(x) & \text{if}\,\, \eps(h)=1\,.
                    \end{array}
                    \right.
$$
As usual, the vacuum mixed moments vanish if $m\neq 2n$ and $\eps$ does not belong to $\{-1,1\}^{2n}_+$.
If $(l_h,r_h)_{h=1}^n$ is the non crossing pair partition induced by
$\eps\in \{-1,1\}^{2n}_+$, the r.h.s. above reduces to
$$
\frac{1}{N^n}\sum_{(l_h,r_h)_{h=1}^n \in NCPP(2n)}\,\sum_{k_{r_1},\cdots, k_{r_{2n}}=1}^N
\om_\Om\big(a^{\eps(1)}_{k_1}\cdots a^{\eps(2n)}_{k_{2n}}\big)\prod_{h=1}^n\overline{f_{l_h}\bigg(\frac{k_{r_h}}{N}\bigg)}f_{r_h}\bigg(\frac{k_{r_h}}{N}\bigg)\,.
$$
To complete the proof, we need to check that for each $j=1,\ldots, n$, one must have $k_{l_j}= k_{r_j}$ to avoid the automatic vanishing of $\om_\Om$.
We first notice that, as a consequence of \eqref{comrul} and Lemma 5.4 in \cite{CFL}, for arbitrary $j,h,k,m$,
\begin{equation}
\label{equation1}
a_ja^\dag_h a_k a^\dag_m=\d_{j,h}\d_{k,m}a_la^\dag_l\,,
\end{equation}
where $\d_{j,h}$ is the Kronecker symbol and $l:=\max\{j,k\}$, and moreover,
\begin{equation}
\label{equation2}
a_ja_ha^\dag_ka^\dag_m=\d_{h,k}\d_{j,m}\d_{j)}(h)a_ja^\dag_j\,.
\end{equation}
We list the cases which appear.
\vskip.3cm
\noindent 1) $(l_h,r_h)_{h=1}^n$ is an interval partition, i.e.
$r_h=l_h+1$ for any $h=1,\ldots, n$. If there exists $j=1,\ldots, n$ s.t. $k_{l_j}\neq k_{r_j}$, then $a^{\eps(1)}_{k_1}\cdots a^{\eps(2n)}_{k_{2n}}$
is null, as from \eqref{comrul}.
\vskip.3cm
\noindent 2) $(l_h,r_h)_{h=1}^n$ is not an interval partition. Here, take an arbitrary $j$ s.t. $r_j\neq l_j+1$. If one proves that
for each $l_j\leq l_h< r_h\leq r_j$, it results $k_{l_h}=k_{r_h}$, the thesis follows after combining this achievement with 1). Indeed, consider the non crossing pair partition $\pi_j$ given by all the blocks $V_h:=(l_h,r_h)$ s.t. $l_j<l_h<r_h<r_j$ for each $h$. Then, this case is split into two sub-cases:
\vskip.3cm
\noindent 2a) in $\pi_j$  one finds $r_h= l_{h}+1$ for each $h$. Then $k_{l_h}=k_{r_h}$ since \eqref{equation1} and further $k_{l_j}=k_{r_j}$ from \eqref{equation2}.
\vskip.3cm
\noindent 2b) $\pi_j$ is not an interval partition. Then, fix a block $V_p:=(l_p,r_p)$ in $\pi_j$ s.t. $r_p\neq l_p+1$ and iterate the same arguments as in 2) and 2a).
\end{proof}
\begin{lem}
\label{mixmom1}
For any $n\in\mathbb{N}$, $\eps\equiv (l_h,r_h)_{h=1}^n\in \{-1,1\}^{2n}_+$ and $k\in \mathfrak{M}_p((l_h,r_h))$, one has
\begin{equation}
\label{chichis}
\om_\Om\big(a^{\eps(1)}_{k_1}\cdots a^{\eps(2n)}_{k_{2n}}\big)=\prod_{1\leq h<m\leq n} \Delta_{k_{r_h},k_{r_m}}(r_h,r_m)\,,
\end{equation}
where
$$
\Delta_{k_{r_h},k_{r_m}}(r_h,r_m):=\d_{r_m)}(r_h)+ \d_{r_h)}(r_m)\d_{k_{r_h})}(k_{r_m})\,.
$$
\end{lem}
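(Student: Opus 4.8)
The plan is to mimic the inductive structure already used for the continuous case in Proposition \ref{contmom}, but working directly with the discrete creation and annihilation operators and exploiting the algebraic relations \eqref{comrul}, \eqref{equation1} and \eqref{equation2}. Since $\eps\equiv(l_h,r_h)_{h=1}^n\in\{-1,1\}^{2n}_+$ is a Dyck word, its associated non crossing pair partition has $l_1=1$, and there is a $j$ with $r_1=2j$ such that $(l_h,r_h)_{h=2}^{j}$ is a non crossing pair partition of $\{2,\ldots,2j-1\}$ and $(l_h,r_h)_{h=j+1}^n$ is one of $\{2j+1,\ldots,2n\}$. As in the proof of Proposition \ref{contmom} I would split into the two cases $j<n$ and $j=n$. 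The factorisation of vacuum mixed moments for interacting Fock spaces (pag.\ 215 in \cite{CrLu}) applies equally well here: $\om_\Om\big(a^{\eps(1)}_{k_1}\cdots a^{\eps(2n)}_{k_{2n}}\big)$ factors as the product of the moment over $\{1,\ldots,2j\}$ and the moment over $\{2j+1,\ldots,2n\}$, and since the $\Delta$-product factors in the same way over the two index ranges (because $\d_{r_m)}(r_h)=\d_{r_h)}(r_m)=0$ whenever $h$ indexes the left block and $m$ a block strictly to its right), the case $j<n$ follows from the induction hypothesis applied twice.

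The substantive case is $j=n$, i.e.\ the partition is connected, $l_1=1$, $r_1=2n$, and $k_1=k_{2n}=:i$. Here $\eps(1)=-1$ and $\eps(2n)=1$, so the monomial is $a_i\, a^{\eps(2)}_{k_2}\cdots a^{\eps(2n-1)}_{k_{2n-1}}\, a^\dag_i$. The idea is to push $a_i$ to the right through the inner monomial. Because $(l_h,r_h)_{h=2}^n$ is non crossing and connected-by-construction the innermost $(-1,1)$-pair to be matched in the Dyck word gives, via \eqref{comrul}, \eqref{equation1} and \eqref{equation2}, a factor of the form $a_i a^\dag_i$ (using $k_1=k_{2n}$) together with Kronecker constraints that are automatically satisfied because $k\in\mathfrak{M}_p((l_h,r_h))$; what survives is precisely the moment $\om_\Om\big(a^{\eps(2)}_{k_2}\cdots a^{\eps(2n-1)}_{k_{2n-1}}\big)$ multiplied by the indicators $\prod_{h=2}^n \d_{k_1)}(k_{r_h})$ coming from the condition "$i<$ everything underneath it" required for $a^\dag_i$ not to annihilate. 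Since $r_1=2n>r_h$ for all $h\ge 2$, we have $\d_{r_1)}(r_h)=1$, so $\Delta_{k_{r_1},k_{r_h}}(r_1,r_h)=\d_{k_{r_1})}(k_{r_h})$, and these are exactly the newly-produced indicators. Invoking the induction hypothesis for the inner moment (which is indexed by the non crossing pair partition $(l_h,r_h)_{h=2}^n$ on $2(n-1)$ elements) and combining with $\prod_{h=2}^n\d_{k_{r_1})}(k_{r_h})=\prod_{h=2}^n\Delta_{k_{r_1},k_{r_h}}(r_1,r_h)$ yields $\prod_{1\le h<m\le n}\Delta_{k_{r_h},k_{r_m}}(r_h,r_m)$, as required.

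For the base case $n=1$ one simply checks $\om_\Om(a_i a^\dag_i)=\langle\Om,(I-\sum_{k=0}^i a^\dag_k a_k)\Om\rangle=1$, which matches the empty product on the right-hand side of \eqref{chichis}.

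I expect the main obstacle to be the bookkeeping in the connected case: making the "push $a_i$ through" argument precise requires choosing the right inner pair to collapse first (the one coming from the first consecutive $(-1,+1)$ in the remaining Dyck word, exactly the recursive pairing scheme described before the $2n=6$ table), verifying that relations \eqref{equation1}–\eqref{equation2} apply with the correct indices, and keeping track of which indicator functions $\d_{j)}(h)$ and $\d_{k_{r_h})}(k_{r_m})$ are produced at each step. One must be careful that the maps $k\in\mathfrak{M}_p((l_h,r_h))$ force $k_{l_h}=k_{r_h}$ for every block, so all Kronecker deltas in \eqref{equation1}–\eqref{equation2} that pair an annihilator index with its conjugate creator index reduce to $1$, while the only nontrivial surviving factors are the order-indicators $\d_{k_{r_h})}(k_{r_m})$ that encode the nesting of the blocks. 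Once this is organised, the computation is a discrete mirror of Lemmata \ref{lemm}–\ref{tocon} and Proposition \ref{contmom}, with Riemann sums replaced by sums over indices.
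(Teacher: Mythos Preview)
Your inductive scheme---factor off the first connected sub-partition when $r_1=2j<2n$, and strip the outermost pair when the partition is connected---is a legitimate alternative that mirrors the continuous argument of Proposition~\ref{contmom}. The paper proceeds differently: it first establishes the \emph{operator} identity \eqref{chi1} for connected words by repeatedly collapsing a specific \emph{inner} pair (the rightmost position where the pattern $(-1,+1,+1)$ occurs) via Lemma~5.4 of \cite{CFL}, and only afterwards takes vacuum expectations and assembles general words from their connected pieces using \eqref{chi2}--\eqref{chi3}. Your route stays at the level of vacuum moments and needs a discrete analogue of Lemma~\ref{tocon}; the paper's route is more algebraic and yields the stronger intermediate statement \eqref{chi1}.

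That said, several details in your proposal are wrong or missing. In the non-connected case your reason for the factorisation of the $\Delta$-product is incorrect: for $h\le j<m$ one has $r_h\le 2j<r_m$, hence $\d_{r_m)}(r_h)=1$ (not $0$) and $\d_{r_h)}(r_m)=0$, so the cross factor is $\Delta=1$; had both deltas vanished as you claim, the product would be $0$ rather than split. In the connected case, the phrase ``$(l_h,r_h)_{h=2}^n$ is \ldots\ connected-by-construction'' is false (take $(1,6)(2,3)(4,5)$), and the paragraph conflates two distinct mechanisms: ``push $a_i$ to the right'' versus ``collapse the innermost pair''. The inequality ``$i<$ everything underneath it'' is also reversed; for inner creators $a^\dag_{k_{r_h}}$ not to annihilate over $e_i$ one needs $k_{r_h}<i$, which is indeed what $\d_{k_1)}(k_{r_h})$ encodes. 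Most importantly, the central identity
\[
\om_\Om\big(a_i\,X\,a^\dag_i\big)=\Big(\prod_{h=2}^n\d_{k_{r_1})}(k_{r_h})\Big)\,\om_\Om(X)
\]
is asserted but not proved; \eqref{equation1}--\eqref{equation2} do not yield it directly. A clean justification: since $k\in\mathfrak{M}_p((l_h,r_h))$ forces all inner indices to be $\neq i$, the state obtained from $e_i$ by any prefix of $X$ is either $0$ or some $e_\alpha$ whose last entry is $i$; hence any inner operator with index $>i$ annihilates it by \eqref{comrul}, while if all inner indices are $<i$ the map $e_\alpha\mapsto e_{(\alpha,i)}$ from the monotone Fock space over $\{0,\dots,i-1\}$ into $\cf_m$ intertwines every $a_l,a^\dag_l$ with $l<i$, giving $Xe_i=\om_\Om(X)\,e_i$.
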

\begin{proof}
We firstly prove that, for a generic $p$, if $(l_h,r_h)_{h=1}^p$ is a non crossing connected pair partition and $k\in  \mathfrak{M}_p((l_h,r_h)_{h=1}^p)$, one has
\begin{equation}
\label{chi1}
a_{k_{l_1}}\cdots a^\dag_{k_{r_1}}=\prod_{1\leq h<m\leq p} \Delta_{k_{r_h},k_{r_m}}(r_h,r_m) a_{k_{r_1}}a^\dag_{k_{r_1}}\,.
\end{equation}
Indeed, for $p=2$ the l.h.s. above reduces to
$$
a_{k_{l_1}}a_{k_{l_2}}a^\dag_{k_{r_2}}a^\dag_{k_{r_1}}=\d_{r_1)}(r_2)\d_{k_{r_1})}(k_{r_2})a_{k_{r_1}}a^\dag_{k_{r_1}}
$$
as for Lemma 5.4 in \cite{CFL}.

For $p>2$, take $m$ the greatest index in ${1,\ldots, 2p}$ s.t. $\eps(r_m-1)=-1$ and $\eps(r_m)=\eps(r_m+1)=1$.
Moreover, the non crossing condition gives $r_m-1=l_m$, $r_m+1=r_q$ for some $q<m$. This entails that on the r.h.s. of $a^\dag_{k_{r_m}}$ one finds
only creators, since the partition is connected. As a consequence, again from Lemma 5.4 in \cite{CFL},
\begin{align*}
a_{k_{l_1}}&\cdots a^\dag_{k_{r_1}}=a_{k_{l_1}}\cdots a^{\eps(r_m-2)}_{k_{r_m-2}}a_{k_{l_m}}a^\dag_{k_{r_m}}a^\dag_{k_{r_q}}\cdots a^\dag_{k_{r_1}}\\
&=\d_{r_q)}(r_m)\d_{k_{r_q})}(k_{r_m})a_{k_{l_1}}\cdots a^{\eps(r_m-2)}_{k_{r_m-2}}a^\dag_{k_{r_q}}\cdots a^\dag_{k_{r_1}}\,.\\
\end{align*}
By \eqref{comrul} one knows the indices of the creators in the chain $a^\dag_{k_{r_q}}\cdots a^\dag_{k_{r_1}}$ are in a strict increasing order.
The r.h.s. above thus becomes
\begin{equation*}
\d_{r_q)(r_m)}\bigg[\prod_{r_q\leq r_s\leq r_1}\d_{k_{r_s})}(k_{r_m})\bigg]a_{k_{l_1}}\cdots a^{\eps(r_j-2)}_{k_{r_j-2}}a^\dag_{k_{r_q}}\cdots a^\dag_{k_{r_1}}\,,
\end{equation*}
which is easy to see equivalent to
$$
\bigg[\prod_{h\neq m}\Delta_{{k_{r_h}},k_{r_m}}(r_h,r_m)\bigg] a_{k_{l_1}}\cdots a^{\eps(r_m-2)}_{k_{r_m-2}}a^\dag_{k_{r_q}}\cdots a^\dag_{k_{r_1}}\,.
$$
Thus \eqref{chi1} follows by a standard induction procedure.
It is further easy to check that for $p<q$,
\begin{align}
\begin{split}
\label{chi2}
&\prod_{1\leq h<m\leq q}\Delta_{{k_{r_h}},k_{r_m}}(r_h,r_m)\\
=&\prod_{1\leq h<m\leq p}\Delta_{{k_{r_h}},k_{r_m}}(r_h,r_m)
\prod_{p+1\leq i<j\leq q}\Delta_{{k_{r_i}},k_{r_j}}(r_i,r_j)\,.
\end{split}
\end{align}
Moreover, if $(l_h,r_h)_{h=1}^p$ is an interval partition and $k\in  \mathfrak{M}_p((l_h,r_h)_{h=1}^n)$, one has
\begin{equation}
\label{chi3}
a_{k_{l_1}}a^\dag_{k_{r_1}}\cdots a_{k_{l_p}}a^\dag_{k_{r_p}}=a_{k_{r_m}}a^\dag_{k_{r_m}}\,,
\end{equation}
where $k_{r_m}=\max\{k_{r_1}, \ldots, k_{r_p}\}$ as from Lemma 5.4 in \cite{CFL}.

Now, for any $n\in\mathbb{N}$, $\eps\in \{-1,1\}^{2n}_+$ and $k\in \mathfrak{M}_p((l_h,r_h)_{h=1}^n)$, the vacuum state on the l.h.s. of \eqref{chichis}
can be factored out by suitable vacuum expectations, each of them exhibiting non crossing connected pair partitions (see, e.g. \cite{CrLu}), that is
$$
\om_\Om\big(a^{\eps(1)}_{k_1}\cdots a^{\eps(2n)}_{k_{2n}}\big)=\om_\Om\bigg(\prod_{h=1}^{r_1}a^{\eps(h)}_{k_h}\bigg)
\om_\Om\bigg(\prod_{h=l_{d_1}}^{r_{d_1}}a^{\eps(h)}_{k_h}\bigg)\cdots
\om_\Om\bigg(\prod_{h=l_{d_m}}^{r_{d_m}}a^{\eps(h)}_{k_h}\bigg)\,,
$$
where $m\leq n$ and $d_j\leq n$ are uniquely determined by $\eps$, and $l_{d_1}=r_1+1,l_{d_2}=r_{d_1}+1,\ldots, l_{d_m}=r_{d_{m-1}}+1$.
The thesis then follows after applying \eqref{chi1}-\eqref{chi3} to the r.h.s. above.
\end{proof}
The following technical result contains two parts, each of them applied to discuss the monotone and the anti-monotone (see Section \ref{sec4}) CLT.
\begin{lem}
\label{means}
Let $n\geq 1$, $f$ and $F$ functions belonging to $\mathcal{R}^\infty([0,1])$ and $\mathcal{R}^\infty([0,1])^n$, respectively. Then
\begin{align}
\begin{split}
\label{conv1}
&\lim_{N\rightarrow \infty} \frac{1}{N^{n+1}}\sum_{k=1}^N f\bigg(\frac{k}{N}\bigg)\sum_{k_1,\ldots,k_n=k}^N F\bigg(\frac{k_1}{N},\ldots, \frac{k_n}{N}\bigg) \\
=&\int_0^1 \di t f(t) \int_{[t,1]^n} \di^n{\rm {\bf t}}F(t_1,\ldots,t_n)\,,
\end{split}
\end{align}
\begin{align}
\begin{split}
\label{conv2}
&\lim_{N\rightarrow \infty} \frac{1}{N^{n+1}}\sum_{k=1}^N f\bigg(\frac{k}{N}\bigg)\sum_{k_1,\ldots,k_n=1}^k F\bigg(\frac{k_1}{N},\ldots, \frac{k_n}{N}\bigg) \\
=&\int_0^1\di t f(t) \int_{[0,t]^n}\di ^n{\rm {\bf t}}F(t_1,\ldots,t_n)\,.
\end{split}
\end{align}
\end{lem}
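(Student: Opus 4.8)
The plan is to read both \eqref{conv1} and \eqref{conv2} as $(n+1)$-dimensional Riemann sums on the cube $[0,1]^{n+1}$, with the dependence of the inner summation range on the outer index $k$ absorbed into a characteristic function. For \eqref{conv1} I would introduce the region
$$
E:=\{(t,t_1,\ldots,t_n)\in[0,1]^{n+1}\mid t\le t_i,\ i=1,\ldots,n\}
$$
and the bounded function $H(t,t_1,\ldots,t_n):=f(t)F(t_1,\ldots,t_n)\chi_E(t,t_1,\ldots,t_n)$. Since $\chi_E$ evaluated at the grid point $(k/N,k_1/N,\ldots,k_n/N)$ equals $1$ exactly when $k\le k_i$ for all $i$, multiplying by $\chi_E$ cuts the full index cube $\{1,\ldots,N\}^{n+1}$ down to the indices with $k\le k_i\le N$, so the left-hand side of \eqref{conv1} is precisely $N^{-(n+1)}\sum_{k,k_1,\ldots,k_n=1}^N H(k/N,k_1/N,\ldots,k_n/N)$. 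For \eqref{conv2} one uses instead $E':=\{(t,t_1,\ldots,t_n)\in[0,1]^{n+1}\mid t_i\le t,\ i=1,\ldots,n\}$ and the analogously defined $H'$.

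Granting this, the argument proceeds in three steps. First, $H$ is Riemann integrable on $[0,1]^{n+1}$: by Lebesgue's criterion the discontinuity sets $D_f\subset[0,1]$ and $D_F\subset[0,1]^n$ of the bounded Riemann integrable functions $f$ and $F$ are Lebesgue-null, hence $D_f\times[0,1]^n$ and $[0,1]\times D_F$ are null in $\mathbb{R}^{n+1}$; moreover $\chi_E$ is continuous off $\partial E\subseteq\bigcup_{i=1}^n\{t=t_i\}$, a finite union of hyperplanes and thus also null; the discontinuity set of the product $H$ is contained in the union of these three null sets, so $H$ (which is bounded) is Riemann integrable, and the same holds verbatim for $H'$ (one may, if desired, work with real and imaginary parts separately). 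Second, for any bounded Riemann integrable function on $[0,1]^d$ the Riemann sums over the uniform partition into $N^d$ congruent subcubes, sampled at the corners ${\bf j}/N$ with ${\bf j}\in\{1,\ldots,N\}^d$, converge to the integral as $N\to\infty$, because the mesh tends to $0$ and for a Riemann integrable integrand the choice of tags is immaterial; with $d=n+1$ this gives $\lim_N N^{-(n+1)}\sum_{{\bf j}}H({\bf j}/N)=\int_{[0,1]^{n+1}}H$ and likewise for $H'$. Third, $H$ and $H'$ are bounded and measurable, so Fubini's theorem yields
$$
\int_{[0,1]^{n+1}}H=\int_0^1 f(t)\Big(\int_{[t,1]^n}F(t_1,\ldots,t_n)\,\di^n{\bf t}\Big)\di t\,,
$$
and the identity with $[t,1]^n$ replaced by $[0,t]^n$ for $H'$; these are exactly the right-hand sides of \eqref{conv1} and \eqref{conv2}.

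The only point that needs genuine care is the first step, namely checking that multiplying the Riemann integrable function $(t,{\bf t})\mapsto f(t)F({\bf t})$ by the characteristic function of the ``simplicial'' region $E$ (resp.\ $E'$) preserves Riemann integrability; everything afterwards is the textbook passage from Riemann sums to the integral together with Fubini. If one prefers to stay strictly within the Riemann framework and avoid Lebesgue's criterion, an alternative route is to verify the identity first for $F$ the characteristic function of a sub-box of $[0,1]^n$ — where both sides reduce to elementary finite sums and iterated integrals that can be compared directly — then extend to finite linear combinations of such functions by linearity, and finally pass to a general $F\in\mathcal{R}^\infty([0,1])^n$ by sandwiching it between step functions using its boundedness. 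I would keep this second route in reserve.
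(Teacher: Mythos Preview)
Your proposal is correct and follows the same underlying idea as the paper: the paper's proof is a single sentence stating that ``the left hand sides provide the limit of the Riemann sums for the Riemann integrals on the right hand ones,'' and your argument is precisely a careful unpacking of that sentence, absorbing the summation constraints into characteristic functions, verifying Riemann integrability via Lebesgue's criterion, and recovering the iterated integrals by Fubini. The only difference is level of detail; there is no divergence in strategy.
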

\begin{proof}
The assertion follows after noticing that the left hand sides provide the limit of the Riemann sums for the Riemann integrals on the right hand ones.
\end{proof}
Here ,we find the main result of the paper.
\begin{thm}
\label{clt}
Let $N\in\mathbb{N}$ and $\eps(1),\ldots,\eps(m)\in\{-1,1\}$. Then, for each $m\in\mathbb{N}$ and $f_1,\ldots, f_m\in \mathcal{R}^\infty([0,1])$, one has
$$
\lim_{N\rightarrow\infty}\om_\Om\big(S_N^{\eps(1)}(a,f_1)\cdots S_N^{\eps(m)}(a,f_m)\big)
$$
vanishes for $m$ odd, and for $m=2n$ is equal to
$$
\om_\Om\big(a^{\varepsilon(1)}(f_1)\cdots a^{\varepsilon(2n)}(f_{2n})\big)\,.
$$
\end{thm}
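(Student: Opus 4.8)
The plan is to match, term by term, the algebraic reduction of the discrete side already obtained in Lemmata~\ref{ppf} and~\ref{mixmom1} with the closed formula for the continuous vacuum moments of Proposition~\ref{contmom}, the passage between the two being handled by the convergence of Riemann sums provided by Lemma~\ref{means}. First I would dispose of the trivial cases. By Lemma~\ref{ppf}, if $m$ is odd, or if $m=2n$ but $\eps\notin\{-1,1\}^{2n}_+$, then $\om_\Om\big(S_N^{\eps(1)}(a,f_1)\cdots S_N^{\eps(m)}(a,f_m)\big)=0$ for every $N$, so the limit is $0$; on the other hand $\om_\Om\big(a^{\eps(1)}(f_1)\cdots a^{\eps(m)}(f_m)\big)$ vanishes as well --- it is $0$ for $m$ odd, and for $m=2n$ with $\eps\notin\{-1,1\}^{2n}_+$ because then the conditions (1)--(2) characterising Dyck words fail. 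Hence from now on one may assume $m=2n$ and $\eps\equiv(l_h,r_h)_{h=1}^n\in\{-1,1\}^{2n}_+$, with $(l_h,r_h)_{h=1}^n\in NCPP(2n)$ the induced non crossing pair partition.

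Next I would substitute the explicit value \eqref{chichis} of the discrete vacuum moment into the expression furnished by Lemma~\ref{ppf}, obtaining
$$
\om_\Om\big(S_N^{\eps(1)}(a,f_1)\cdots S_N^{\eps(2n)}(a,f_{2n})\big)=\frac{1}{N^n}\sum_{k\in\mathfrak{M}_p((l_h,r_h)_{h=1}^n,N)}\ \prod_{1\leq h<m\leq n}\Delta_{k_{r_h},k_{r_m}}(r_h,r_m)\ \prod_{h=1}^n\overline{f_{l_h}\Big(\tfrac{k_{r_h}}{N}\Big)}\,f_{r_h}\Big(\tfrac{k_{r_h}}{N}\Big)\,.
$$
Since the maps in $\mathfrak{M}_p((l_h,r_h)_{h=1}^n,N)$ are precisely the $2$-$1$ maps with $k(l_h)=k(r_h)$, they are parametrised by the $n$-tuples $(k_{r_1},\dots,k_{r_n})\in\{1,\dots,N\}^n$ with \emph{pairwise distinct} entries. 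Introducing the bounded function
$$
G(t_1,\dots,t_n):=\prod_{1\leq h<m\leq n}\Delta_{t_h,t_m}(r_h,r_m)\ \prod_{h=1}^n\overline{f_{l_h}(t_h)}\,f_{r_h}(t_h)\,,\qquad (t_1,\dots,t_n)\in[0,1]^n\,,
$$
and noting that $\Delta_{t_h,t_m}(r_h,r_m)\in\{0,1\}$ (for $r_h\neq r_m$ exactly one of $\d_{r_m)}(r_h)$, $\d_{r_h)}(r_m)$ equals $1$), each summand is bounded in modulus by $C:=\prod_{h=1}^n\|f_{l_h}\|_\infty\|f_{r_h}\|_\infty$. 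There are at most $\binom{n}{2}N^{n-1}$ tuples with a repeated entry, so their total contribution is $O(1/N)$ and may be discarded; moreover, off the diagonal one has $\d_{k_{r_h})}(k_{r_m})=\chi_{[0,t_h)}(t_m)$ with $t_h=k_{r_h}/N$, $t_m=k_{r_m}/N$, so the remaining summand equals $G(k_{r_1}/N,\dots,k_{r_n}/N)$. Therefore
$$
\lim_{N\to\infty}\om_\Om\big(S_N^{\eps(1)}(a,f_1)\cdots S_N^{\eps(2n)}(a,f_{2n})\big)=\lim_{N\to\infty}\frac{1}{N^n}\sum_{k_{r_1},\dots,k_{r_n}=1}^N G\Big(\tfrac{k_{r_1}}{N},\dots,\tfrac{k_{r_n}}{N}\Big)\,,
$$
provided the latter limit exists.

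To identify this limit I would note that $G$ is Riemann integrable on $[0,1]^n$: each factor $\Delta_{t_h,t_m}(r_h,r_m)$ is either constant or the characteristic function of $\{t_m<t_h\}$, whose discontinuity set lies in the measure-zero hyperplane $\{t_h=t_m\}$, and each $t_h\mapsto\overline{f_{l_h}(t_h)}f_{r_h}(t_h)$ is Riemann integrable; hence $G$ is bounded with a discontinuity set of measure zero. Consequently the right-hand side above is a genuine Riemann sum of $G$ over the uniform grid of $[0,1]^n$, and converges to $\int_{[0,1]^n}G(t_1,\dots,t_n)\,\di t_1\cdots\di t_n$; equivalently, one may reach the same conclusion by iterating Lemma~\ref{means}: in the connected case $r_1=2n$, so $\Delta_{t_1,t_m}(r_1,r_m)=\chi_{[0,t_1)}(t_m)$ for all $m\geq2$, and summing first over $k_{r_1}$ and then over the remaining indices (constrained to $\{1,\dots,k_{r_1}\}$) is exactly the setting of \eqref{conv2}, whereas in the non connected case one first splits the sum according to the factorisation of $\om_\Om$ into connected blocks (as in the proofs of Lemma~\ref{mixmom1} and Proposition~\ref{contmom}) and argues on each factor. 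In either case the limit equals $\int_{[0,1]^n}G$, which by Proposition~\ref{contmom}, i.e. by \eqref{mixmomme}, is precisely $\om_\Om\big(a^{\eps(1)}(f_1)\cdots a^{\eps(2n)}(f_{2n})\big)$. This proves the theorem.

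I expect the main obstacle to be the two analytic points in the last two paragraphs: controlling the passage from the sum over $\mathfrak{M}_p((l_h,r_h)_{h=1}^n,N)$ to the full uniform-grid sum over $\{1,\dots,N\}^n$ --- handled above by the $O(1/N)$ estimate, which relies crucially on $\Delta\leq1$ and on the boundedness of the $f_i$ --- and verifying that the characteristic-function weights $\Delta_{t_h,t_m}(r_h,r_m)$ do not destroy Riemann integrability, so that the Riemann sums genuinely converge and Lemma~\ref{means} really applies. The purely combinatorial bookkeeping of attaching each integration variable $t_h$ to the correct block $(l_h,r_h)$, so that the discrete weights line up with those appearing in \eqref{mixmomme}, also has to be carried out carefully, but it is routine once the identification $\eps\equiv(l_h,r_h)_{h=1}^n$ is fixed.
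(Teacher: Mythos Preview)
Your proposal is correct and follows essentially the same route as the paper: reduce via Lemmata~\ref{ppf} and~\ref{mixmom1} to a sum over $\mathfrak{M}_p((l_h,r_h)_{h=1}^n,N)$, absorb the diagonal terms as $O(1/N)$, and recognise the remaining uniform-grid sum as a Riemann sum for the integral in Proposition~\ref{contmom}. The only difference is cosmetic---the paper establishes convergence of that Riemann sum by an explicit induction on $n$ using \eqref{conv2}, whereas you invoke the Lebesgue criterion for Riemann integrability of $G$ directly (and mention the inductive route as an alternative); both are valid and lead to the same conclusion.
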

\begin{proof}
Thanks to Lemmata \ref{ppf} and \ref{mixmom1}, one has that
$$
\om_\Om\big(S_N^{\eps(1)}(a,f_1)\cdots S_N^{\eps(m)}(a,f_m)\big)
$$
is null when $m$ is odd or even with $\eps=\eps(1),\ldots,\eps(m)\notin \{-1,1\}^{2n}_+$. For $m=2n$ and $\eps\in \{-1,1\}^{2n}_+$, it is equal to
\begin{equation}
\label{star3}
\frac{1}{N^n}\sum_{k\in \mathfrak{M}_p((l_h,r_h)_{h=1}^n,N)}F(k_{r_1},\ldots, k_{r_n})\prod_{h=1}^n \overline{f_{l_h}\bigg(\frac{k_{r_h}}{N}\bigg)}f_{r_h}\bigg(\frac{k_{r_h}}{N}\bigg)\,,
\end{equation}
where $(l_h,r_h)_{h=1}^n$ is the non crossing pair partition induced by $\eps$ and
$$
F(t_1, \ldots, t_n):=\prod_{1\leq h<m\leq n}
\bigg[\d_{r_m)}(r_h)+ \d_{r_h)}(r_m)\d_{t_h)}(t_m)\bigg]\,.
$$
By Proposition \ref{contmom}, we need to show that \eqref{star3} converges, for $N\rightarrow \infty$, to
\begin{equation}
\label{star33}
\int_{[0,1]^n}\prod_{1\leq h<m\leq n}\Delta_{t_h,t_m}(r_h,r_m)
\prod_{h=1}^n\overline{f_{l_h}(t_h)}f_{r_h}(t_h)\di t_h\,.
\end{equation}
Notice that $F\in\mathcal{R}^\infty([0,1]^n)$, and further
\begin{equation*}
F(t_1, \ldots, t_n)=F(ct_1, \ldots, ct_n)
\end{equation*}
for each $c>0$.
As a consequence, \eqref{star3} reduces to
\begin{equation}
\label{limit}
\frac{1}{N^n}\sum_{k\in \mathfrak{M}_p((l_h,r_h)_{h=1}^n,N)}G\bigg(\frac{k_{r_1}}{N},\ldots,\frac{k_{r_n}}{N}\bigg)\,,
\end{equation}
where
$$
G(t_1,\ldots, t_n):=F(t_1,\ldots, t_n)\prod_{h=1}^n\overline{f_{l_h}(t_h)}f_{r_h}(t_h)\,.
$$
As $k\in \mathfrak{M}_p((l_h,r_h)_{h=1}^n,N)$, one has $k_{r_h}\neq k_{r_m}$ when $h\neq m$. If for $j=2,\ldots, n$,
$V_j$ denotes the totality of the sequences $(k_{r_1},\ldots, k_{r_n})\in\{1,\ldots, N\}^n$ containing exactly $j$ identical elements, it follows
\begin{align*}
\sum_{k_{r_1},\ldots, k_{r_n}=1}^N G\bigg(\frac{k_{r_1}}{N},\ldots,\frac{k_{r_n}}{N}\bigg)&=
\sum_{k\in \mathfrak{M}_p((l_h,r_h)_{h=1}^n,N)}G\bigg(\frac{k_{r_1}}{N},\ldots,\frac{k_{r_n}}{N}\bigg)\\
&+\sum_{j=2}^n \sum_{(k_{r_1},\ldots, k_{r_n})\in V_j}G\bigg(\frac{k_{r_1}}{N},\ldots,\frac{k_{r_n}}{N}\bigg)\,.
\end{align*}
If $M:=\max\{\|f_1\|_\infty,\ldots, \|f_{2n}\|_\infty\}$,
for any $j$ it results
$$
\bigg|\sum_{(k_{r_1},\ldots, k_{r_n})\in V_j}G\bigg(\frac{k_{r_1}}{N},\ldots,\frac{k_{r_n}}{N}\bigg)\bigg|\leq \binom{n}{j}M^{2n} N(N-1)\cdots (N-n+j)\,,
$$
and consequently one writes down \eqref{limit} as
\begin{equation*}
\frac{1}{N^n}\sum_{k_{r_1},\ldots, k_{r_n}=1}^N G\bigg(\frac{k_{r_1}}{N},\ldots,\frac{k_{r_n}}{N}\bigg)+ o\bigg(\frac{1}{N}\bigg)\,.
\end{equation*}
Since $G$ is Riemann integrable in $[0,1]^n$, to complete the proof we need to prove that the Riemann sum
\begin{equation}
\label{f1clt}
\frac{1}{N^n}\sum_{k_{r_1},\ldots, k_{r_n}=1}^N G\bigg(\frac{k_{r_1}}{N},\ldots,\frac{k_{r_n}}{N}\bigg)
\end{equation}
converges for $N\rightarrow \infty$ to \eqref{star33}.
Indeed, for $n=1$ the thesis easily follows. When $n=2$, two cases appear, according to whether $\eps$ determines the partition $((1,2),(3,4))$ or $((1,4),(2,3))$.
We see \eqref{f1clt}
in the first case gives
$$
\frac{1}{N^2}\sum_{k_{r_1},k_{r_2}=1}^N\d_{r_2)}(r_1)\overline{f_{l_1}\bigg(\frac{k_{r_1}}{N}\bigg)}f_{r_1}\bigg(\frac{k_{r_1}}{N}\bigg)
\overline{f_{l_2}\bigg(\frac{k_{r_2}}{N}\bigg)}f_{r_2}\bigg(\frac{k_{r_2}}{N}\bigg)\,,
$$
whose limit, for $N\rightarrow \infty$ is
$$
\int_{[0,1]^2}\d_{r_2)}(r_1)\overline{f_{l_1}(t_1)}f_{r_1}(t_1)
\overline{f_{l_2}(t_2)}f_{r_2}(t_2)\di t_1 \di t_2\,.
$$
In the latter case, instead, it reduces to
$$
\frac{1}{N^2}\sum_{k_{r_1},k_{r_2}=1}^N\d_{r_1)}(r_2)\overline{f_{l_1}\bigg(\frac{k_{r_1}}{N}\bigg)}f_{r_1}\bigg(\frac{k_{r_1}}{N}\bigg)
\overline{f_{l_2}\bigg(\frac{k_{r_2}}{N}\bigg)}f_{r_2}\bigg(\frac{k_{r_2}}{N}\bigg)\d_{k_{r_1})}(k_{r_2})\,.
$$
Thus, the limit gives
\begin{align*}
&\int_0^1\d_{r_1)}(r_2)\overline{f_{l_1}(t_1)}f_{r_1}(t_1)\bigg(\int_0^{t_1}\overline{f_{l_2}(t_2)}f_{r_2}(t_2)\di t_2\bigg) \di t_1 \\
=&\int_{[0,1]^2}\d_{r_1)}(r_2)\chi_{[0,t_1)}(t_2)\overline{f_{l_1}(t_1)}f_{r_1}(t_1)\overline{f_{l_2}(t_2)}f_{r_2}(t_2)\di t_1 \di t_2\,,
\end{align*}
the first equality following from \eqref{conv2}.

We now suppose that the result holds for any $k<n$ and extend it to $n$ by induction. The non crossing condition yields that there exists $j\leq n$ s.t. $r_1=2j$. If $j<n$,
the induction assumption directly yields the thesis since
\begin{align*}
 &\prod_{1\leq h<m\leq n}\Delta_{k_h,k_m}(r_h,r_m) \\
 =&\bigg[\prod_{1\leq h<m\leq j} \Delta_{k_h,k_m}(r_h,r_m)\bigg]\bigg[\prod_{j+1\leq h<m\leq n}\Delta_{k_h,k_m}(r_h,r_m)\bigg]\,.
\end{align*}
If $j=n$, (i.e. $(l_h,r_h)_{h=1}^n$ is a connected pair partition), one firstly notices
\begin{equation*}
F\bigg(\frac{k_{r_1}}{N},\ldots, \frac{k_{r_n}}{N}\bigg)
=F\bigg(\frac{k_{r_2}}{N},\ldots, \frac{k_{r_n}}{N}\bigg)\prod_{h=2}^n\d_{r_1)}(r_h)\d_{k_{r_1})}(k_{r_h})
\end{equation*}
As a consequence,
\begin{align}
\begin{split}
\label{star333}
&\frac{1}{N^n}\sum_{k_{r_1},\ldots, k_{r_n}=1}^N G\bigg(\frac{k_{r_1}}{N},\ldots,\frac{k_{r_n}}{N}\bigg) \\
=&\frac{1}{N^n}\sum_{k_{r_1}=1}^N \prod_{h=2}^n\d_{r_1)}(r_h)\overline{f_{l_1}\bigg(\frac{k_{r_1}}{N}\bigg)}f_{r_1}\bigg(\frac{k_{r_1}}{N}\bigg)
\sum_{k_{r_2},\ldots, k_{r_n}=1}^{k_{r_1}} G\bigg(\frac{k_{r_2}}{N},\ldots,\frac{k_{r_n}}{N}\bigg)\,.
\end{split}
\end{align}
Now, denote
$$
H(t_1, \ldots, t_n):=\prod_{1\leq h<m\leq n}\Delta_{t_h,t_m}(r_h,r_m)\,.
$$
Since \eqref{conv2}, the r.h.s. in \eqref{star333} converges for $N\rightarrow \infty$, to
$$
\int_0^1(\overline{f_{l_1}(t_1)}f_{r_1})(t_1)\bigg[\int_{[0,t_1)^{n-1}}H(t_2,\ldots,t_n)\prod_{h=2}^n\d_{r_1)}(r_h)\overline{f_{l_h}(t_h)}f_{r_h}(t_h)\di t_h\bigg]\di t_1
$$
or, equivalently, to
$$
\int_{[0,1]^{n}}\overline{f_{l_1}(t_1)}f_{r_1}(t_1)\bigg[H(t_2,\ldots,t_n)\prod_{h=2}^n\d_{r_1)}(r_h)\overline{f_{l_h}(t_h)}(\chi_{[0,t_1)}
f_{r_h})(t_h)\di t_h\bigg]\di t_1\,,
$$
which is nothing else that \eqref{star33}.
\end{proof}
Theorem \ref{clt} entails as a particular case another central limit-type result shown in the following corollary.
Namely, one finds that the moments of sums of  self-adjoint operators $s_i:=a_i+a^\dag_i$ (i.e. the position operators)
converge, up to rescaling, to the moments of a random variable with arcsine law supported in $[-\sqrt{2},\sqrt{2}]$.
Such a result has been already obtained in \cite{Mur} as a consequence of the CLT for monotone independent random variables.
Here, it is gained only by using the algebraic form of the elements of the monotone $C^*$-algebra.
\begin{cor}
\label{cor}
For each $m\geq 0$,
$$
\lim_{N\rightarrow\infty}\om_\Om\bigg(\bigg(\frac{1}{\sqrt{N}}\sum_{i=1}^Ns_i\bigg)^m\bigg)=\int_{-\sqrt{2}}^{\sqrt{2}}x^m\frac{1}{\pi\sqrt{2-x^2}}dx\,.
$$
\end{cor}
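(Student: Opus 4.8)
The plan is to specialise Theorem~\ref{clt} to constant test functions. Let $\idd\in\mathcal{R}^\infty([0,1])$ denote the function constantly equal to $1$ and put $f_1=\cdots=f_m=\idd$. Since
$$
S_N^{1}(a,\idd)+S_N^{-1}(a,\idd)=\frac{1}{\sqrt N}\sum_{i=1}^N\big(a^\dag_i+a_i\big)=\frac{1}{\sqrt N}\sum_{i=1}^N s_i\,,
$$
expanding the $m$-th power and using the linearity of $\om_\Om$ yields
$$
\om_\Om\bigg(\bigg(\frac{1}{\sqrt N}\sum_{i=1}^N s_i\bigg)^m\bigg)=\sum_{\eps\in\{-1,1\}^m}\om_\Om\big(S_N^{\eps(1)}(a,\idd)\cdots S_N^{\eps(m)}(a,\idd)\big)\,,
$$
a finite sum. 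By Theorem~\ref{clt} the right-hand side tends, as $N\to\infty$, to $0$ when $m$ is odd and, when $m=2n$, to $\sum_{\eps\in\{-1,1\}^{2n}}\om_\Om\big(a^{\eps(1)}(\idd)\cdots a^{\eps(2n)}(\idd)\big)=\om_\Om\big((a(\idd)+a^\dag(\idd))^{2n}\big)$. On the other hand, the right-hand side of the asserted identity is classical: the substitution $x=\sqrt2\,u$ together with the standard arcsine moments on $[-1,1]$ gives $\int_{-\sqrt2}^{\sqrt2}x^m\,\frac{\di x}{\pi\sqrt{2-x^2}}=0$ for $m$ odd and $=2^{-n}\binom{2n}{n}=:d_n$ for $m=2n$. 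Thus the corollary is reduced to the equality $M_{2n}:=\om_\Om\big((a(\idd)+a^\dag(\idd))^{2n}\big)=d_n$.

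To evaluate $M_{2n}$ I would establish a first-return recursion. More generally, for $u\in(0,1]$ set $M_{2n}(u):=\om_\Om\big((a(\chi_{[0,u)})+a^\dag(\chi_{[0,u)}))^{2n}\big)$; Proposition~\ref{contmom} expresses $M_{2n}(u)$ as $\sum_{\sigma\in NCPP(2n)}\int_{[0,u)^n}\prod_{1\le h<m\le n}\Delta_{t_h,t_m}(r_h,r_m)\,\di t_1\cdots\di t_n$, and the change of variables $t_h\mapsto u\,t_h$ factors out $u^n$, because each $\Delta_{t_h,t_m}(r_h,r_m)$ depends on the integration variables only through the comparison $t_m<t_h$; hence $M_{2n}(u)=u^n M_{2n}$. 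Next, write $M_{2n}=\sum_{\eps\in\{-1,1\}^{2n}_+}\om_\Om\big(a^{\eps(1)}(\idd)\cdots a^{\eps(2n)}(\idd)\big)$ and group the Dyck words according to the partner $r_1=2k$ of position $1$ in the induced non crossing pair partition: positions $2,\dots,2k-1$ then range over all Dyck words of length $2(k-1)$ and, independently, positions $2k+1,\dots,2n$ over those of length $2(n-k)$. The factorisation rule for vacuum moments in interacting Fock spaces (used in the proof of Proposition~\ref{contmom}) splits each moment across the position $r_1$, while Lemma~\ref{tocon}, applied to the connected left factor on $\{1,\dots,2k\}$, rewrites it as $\int_0^1\om_\Om\big(a^{\eps(2)}(\chi_{[0,t)})\cdots a^{\eps(2k-1)}(\chi_{[0,t)})\big)\di t$. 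Summing over the admissible $\eps$ and invoking $M_{2(k-1)}(t)=t^{k-1}M_{2(k-1)}$ produces
$$
M_{2n}=\sum_{k=1}^n\bigg(\int_0^1 t^{k-1}\,\di t\bigg)M_{2(k-1)}M_{2(n-k)}=\sum_{k=1}^n\frac1k\,M_{2(k-1)}M_{2(n-k)}\,,\qquad M_0=1\,.
$$

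Finally I would check that $d_n=2^{-n}\binom{2n}{n}$ solves the same recursion with $d_0=1$, so that induction on $n$ forces $M_{2n}=d_n$ for every $n$ and the corollary follows. Setting $k=j+1$ and observing that $\frac{1}{j+1}d_j=2^{-j}\mathrm{Cat}_j$, with $\mathrm{Cat}_j=\frac{1}{j+1}\binom{2j}{j}$ the $j$-th Catalan number, the recursion becomes $d_n=\sum_{j=0}^{n-1}2^{-j}\mathrm{Cat}_j\,d_{n-1-j}$ for $n\geq1$; with the generating functions $D(x):=\sum_{n\geq0}d_n x^n=(1-2x)^{-1/2}$ and $\sum_{j\geq0}\mathrm{Cat}_j y^j=\frac{1-\sqrt{1-4y}}{2y}$ this is equivalent to $D(x)-1=\big(1-\sqrt{1-2x}\big)D(x)$, which follows at once from $\big(1-\sqrt{1-2x}\big)(1-2x)^{-1/2}=(1-2x)^{-1/2}-1$. (Alternatively, one could evaluate the right-hand side of Proposition~\ref{contmom} directly, noting that for a non crossing pair partition $\prod_{1\le h<m\le n}\Delta_{t_h,t_m}(r_h,r_m)$ is the indicator function of the region cut out by the nesting order of the blocks, so the integral is the volume of the corresponding forest order polytope, and then summing these volumes over $NCPP(2n)$.) The one genuinely delicate point is the bookkeeping in the first-return step: one has to track the cutoffs $\chi_{[0,t)}$ that Lemma~\ref{tocon} introduces on the inner block and to recognise that they are exactly absorbed by the scaling identity $M_{2n}(u)=u^n M_{2n}$; everything else is routine.
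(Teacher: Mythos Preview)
Your reduction via Theorem~\ref{clt} to the identity $\om_\Om\big((a(\idd)+a^\dag(\idd))^{2n}\big)=2^{-n}\binom{2n}{n}$ matches the paper exactly. The paper, however, does not prove this last identity: it simply invokes Theorem~3.6 of \cite{Lu}, which already identifies the vacuum law of $a(\idd)+a^\dag(\idd)$ on $\Gamma_M$ with the arcsine distribution on $[-\sqrt2,\sqrt2]$.

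Your route to the moment identity is therefore genuinely different and self-contained. The scaling argument $M_{2n}(u)=u^nM_{2n}$ is sound, since by Proposition~\ref{contmom} the integrand $\prod_{h<m}\Delta_{t_h,t_m}(r_h,r_m)$ depends on the $t_h$ only through the indicators $\chi_{[0,t_h)}(t_m)$; combined with Lemma~\ref{tocon} and the factorisation across $r_1$ it yields the first-return recursion $M_{2n}=\sum_{k=1}^n\frac1k\,M_{2(k-1)}M_{2(n-k)}$, and your generating-function check that $d_n=2^{-n}\binom{2n}{n}$ satisfies the same recursion is correct (it amounts to $\sqrt{1-2x}\,D(x)=1$). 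What you gain is independence from the external reference \cite{Lu}, at the cost of a somewhat longer argument; what the paper gains is brevity, by outsourcing the moment computation. Both are valid.
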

\begin{proof}
Denote symbolically by 1 the function constantly equal to $1$ on $[0,1]$. By expanding the power on the l.h.s. we first note that for $m$ odd the vacuum expectation is null. When $m=2n$, arguing as in Lemma \ref{ppf}, we get
\begin{align*}
\om_\Om\bigg(\bigg(\frac{1}{\sqrt{N}}\sum_{i=1}^Ns_i\bigg)^m&\bigg)
=\frac{1}{N^{\frac{m}{2}}}\sum_{\eps\in\{-1,1\}^m} \sum_{k_1,\ldots, k_m=1}^N
\om_\Om\big(a^{\eps(1)}_{k_1}a^{\eps(2)}_{k_2}\cdots  a^{\eps(m)}_{k_m}\big)\\
=&\frac{1}{N^n}\sum_{\eps\in\{-1,1\}^{2n}_+}\sum_{k_1,\ldots, k_{2n}=1}^N
\om_\Om\big(a^{\eps(1)}_{k_1}a^{\eps(2)}_{k_2}\cdots  a^{\eps(2n)}_{k_{2n}}\big)\\
=&\frac{1}{N^n}\sum_{\eps\in\{-1,1\}^{2n}_+}\sum_{k\in \mathfrak{M}_p((l_h,r_h)_{h=1}^n,N)}\om_\Om\big(a^{\eps(1)}_{k_1}a^{\eps(2)}_{k_2}\cdots  a^{\eps(2n)}_{k_{2n}}\big) \\
=&\sum_{\eps\in\{-1,1\}^{2n}_+}\om_\Om\big(S_N^{\eps(1)}(a,1)\cdots S_N^{\eps(2n)}(a,1)\big)\,.
\end{align*}
By Theorem \ref{clt}, the r.h.s. above converges for $N\rightarrow \infty$ to
\begin{equation*}
\sum_{\eps\in\{-1,1\}^{2n}_+}\om_\Om\big(a^{\eps(1)}(1)\cdots a^{\eps(2n)}(1)\big)\,,
\end{equation*}
which is exactly $\om_\Om\big((a(1)+a^\dag(1))^{m}\big)$
in the sense that the moments above vanish for odds $m$, and the equality holds for $m=2n$.
The proof then follows from Theorem 3.6 in \cite{Lu}. 
\end{proof}
We end the section presenting an invariance principle (known as Donsker's invariance principle in classical probability, see \cite{Do})
in the monotone setting. This could be seen as as extension of some results previously obtained in \cite{DeLu, Mur2} concerning
the passage from the quantum random walks constructed in discrete
Monotone Fock spaces, to the canonical position process $\big(a(\chi_{[0,t]})+a^\dag(\chi_{[0,t]})\big)_{t\geq 0}$ in the continuous Monotone Fock space. To deal with the invariance principle, for $N\in \mathbb{N}$,
$0\leq s<t\leq 1$, we get the process
$$
S_{N, [s,t]}^\eps(a,f):=\frac{1}{\sqrt{N}}
\sum_{k=[Ns]+1}^{[Nt]} a_{k}^\eps f\bigg(\frac{k}{N}\bigg),
$$
where for $x\in\mathbb{R}$, $[x]$ is the unique integer s.t. $[x]\leq x< [x]+1$ and $f\in \mathcal{R}^\infty ([0,1])$.
\begin{thm}
Let $N\in\mathbb{N}$. Then, for each $m\in\mathbb{N}$, $\eps(1),\ldots,\eps(m)\in\{-1,1\}$, $f_1,\ldots, f_m\in\mathcal{R}^\infty ([0,1])$, $s_i<t_i$, $i=1,\ldots,m$,
\begin{align}
\begin{split}
\label{fclt}
&\lim_{N\rightarrow\infty}\om_{\Om}\big(S_{N, [s_1,t_1]}^{\eps(1)}(a,f_1)\cdots S_{N, [s_m,t_m]}^{\eps(m)}(a,f_m)\big) \\
=& \bigg\{ \begin{array}{ll}
     0 & {\rm if}\,\,m\,\,{\rm is\,\, odd}\,,\\
     \om_{\Om}\big(a^{\eps(1)}(f_1\chi_{[s_1,t_1]})\cdots a^{\eps(2n)}(f_{2n}\chi_{[s_{2n},t_{2n}]}\big) &  {\rm if}\,\, m=2n\,.
   \end{array}
\end{split}
\end{align}
\end{thm}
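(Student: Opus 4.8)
The plan is to reproduce, almost verbatim, the proof of Theorem~\ref{clt}; the only new feature is the bookkeeping of the index windows $\{[Ns_i]+1,\ldots,[Nt_i]\}$. First I would expand every factor, getting
\[
\om_\Om\big(S_{N,[s_1,t_1]}^{\eps(1)}(a,f_1)\cdots S_{N,[s_m,t_m]}^{\eps(m)}(a,f_m)\big)
=\frac{1}{N^{m/2}}\sum_{\substack{[Ns_h]+1\le k_h\le[Nt_h]\\ h=1,\ldots,m}}
\om_\Om\big(a^{\eps(1)}_{k_1}\cdots a^{\eps(m)}_{k_m}\big)\prod_{h=1}^m f_h^\#\Big(\frac{k_h}{N}\Big),
\]
with $f_h^\#$ as in the proof of Lemma~\ref{ppf}. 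That proof relies only on \eqref{comrul}, \eqref{equation1}, \eqref{equation2} and Lemma~5.4 of \cite{CFL}, none of which depends on the set over which the indices run; hence it applies word for word, and the summand can be nonzero only if $m=2n$, $\eps\in\{-1,1\}^{2n}_+$, and $k_{l_h}=k_{r_h}$ for the non crossing pair partition $(l_h,r_h)_{h=1}^n$ induced by $\eps$. The sole change is that the common value of $k_{l_h}$ and $k_{r_h}$ must now lie in the intersection $\{[Ns_{l_h}]+1,\ldots,[Nt_{l_h}]\}\cap\{[Ns_{r_h}]+1,\ldots,[Nt_{r_h}]\}$ of the two windows attached to the legs $l_h,r_h$; writing $s_h':=\max\{s_{l_h},s_{r_h}\}$, $t_h':=\min\{t_{l_h},t_{r_h}\}$ and $W_h:=\{[Ns_h']+1,\ldots,[Nt_h']\}$, this reads $k_{r_h}\in W_h$, and $(1/N)|W_h|\to\max\{t_h'-s_h',0\}=\big|[s_{l_h},t_{l_h}]\cap[s_{r_h},t_{r_h}]\big|$.

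Next, using Lemma~\ref{mixmom1} to replace $\om_\Om\big(a^{\eps(1)}_{k_1}\cdots a^{\eps(2n)}_{k_{2n}}\big)$ by the scale-invariant $F(k_{r_1}/N,\ldots,k_{r_n}/N)$ of the proof of Theorem~\ref{clt}, I reduce the moment to the analogue of \eqref{limit},
\[
\frac{1}{N^n}\sum_{\substack{k\in\mathfrak{M}_p((l_h,r_h)_{h=1}^n,N)\\ k_{r_h}\in W_h,\ h=1,\ldots,n}}
G\Big(\frac{k_{r_1}}{N},\ldots,\frac{k_{r_n}}{N}\Big),\qquad G:=F\prod_{h=1}^n\overline{f_{l_h}}\,f_{r_h}\in\mathcal{R}^\infty([0,1]^n),
\]
now with each $k_{r_h}$ confined to $W_h$. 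Because $G$ is Riemann integrable, the same estimate on the ``collision'' sets $V_j$ as in the proof of Theorem~\ref{clt} lets me drop the condition $k\in\mathfrak{M}_p$ up to an $o(1/N)$ error, so it remains to show that the Riemann sum $\frac{1}{N^n}\sum_{k_{r_1}\in W_1}\cdots\sum_{k_{r_n}\in W_n}G(\cdots)$ converges to
\[
\int_{[0,1]^n}\prod_{1\le h<m\le n}\Delta_{t_h,t_m}(r_h,r_m)\prod_{h=1}^n
\overline{(f_{l_h}\chi_{[s_{l_h},t_{l_h}]})(t_h)}\,(f_{r_h}\chi_{[s_{r_h},t_{r_h}]})(t_h)\,\di t_h,
\]
which, by Proposition~\ref{contmom} applied to the test functions $f_i\chi_{[s_i,t_i]}\in\gh$, is precisely the right-hand side of \eqref{fclt} for $m=2n$; the odd-$m$ case vanishes exactly as in Theorem~\ref{clt}.

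The convergence itself I would get by the induction on $n$ used in Theorem~\ref{clt}. The product $\prod_{1\le h<m\le n}\Delta$ factors at the block $r_1=2j$: for $j<n$ the inductive hypothesis applies to the two resulting factors, while for the connected case $j=n$ one extracts $\prod_{h\ge2}\d_{r_1)}(r_h)\d_{k_{r_1})}(k_{r_h})$ and is left with a nested sum whose inner indices $k_{r_h}$, $h\ge2$, run over $W_h\cap\{1,\ldots,k_{r_1}\}$. A version of Lemma~\ref{means}, with the unit cubes there replaced by the pertinent products of intervals (the left-hand sides being still Riemann sums for the stated integrals), then passes the outer window $W_1$ and the cut $\{1,\ldots,k_{r_1}\}$ to the factor $\chi_{[s_{l_h},t_{l_h}]\cap[s_{r_h},t_{r_h}]}(t_h)\chi_{[0,t_1)}(t_h)$ and identifies the limit with the displayed integral.

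The only genuinely delicate point is this bookkeeping in the connected step: keeping track of which interval each inner index is confined to once it is also cut by $\{1,\ldots,k_{r_1}\}$, and verifying that the characteristic functions surviving in the limit are exactly $\chi_{[s_{l_h},t_{l_h}]}\chi_{[s_{r_h},t_{r_h}]}$, so that the outcome coincides with Proposition~\ref{contmom} evaluated at $f_i\chi_{[s_i,t_i]}$. Everything else is a line-by-line transcription of the proof of Theorem~\ref{clt}.
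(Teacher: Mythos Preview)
Your proposal is correct and follows essentially the same route as the paper: reduce to $m=2n$ and $\eps\in\{-1,1\}^{2n}_+$ via Lemma~\ref{ppf}, replace the vacuum moment by the scale-invariant $F$ via Lemma~\ref{mixmom1}, discard collisions at cost $o(1/N)$, and finish by the same induction on $n$ with the split $j<n$/$j=n$, invoking \eqref{conv2} and Proposition~\ref{contmom} for the connected case. The only cosmetic difference is bookkeeping: the paper absorbs each index window into a characteristic function multiplying the test function (via $\sum_{k=[Ns]+1}^{[Nt]}a_k=\sum_{k=[N\overline s]+1}^{[N\overline t]}a_k\,\chi_{[[Ns]+1,[Nt]]}(k)$, thereby extending all sums to a common domain and working directly with $G_\chi$), whereas you track the intersection windows $W_h$ explicitly; both devices encode the same constraint and lead to the same limiting integral.
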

\begin{proof}
The proof runs along the same arguments shown in Theorem \ref{clt}. Thus, we limit ourselves to highlights the main changes appearing in.
Having achieved the vanishing
of the l.h.s. in \eqref{fclt} for odd $m$, when $m=2n$ one has that
the result holds for any $k=1$. Suppose further that it is true for $k<n$ and extend it to $n$.
The non crossing condition yields that there exists $j\leq n$ s.t. $r_1=2j$. If $j<n$, since
$$
\frac{1}{N}\sum_{k=1}^{[Nt]}a_k=\frac{t}{[Nt]+\d}\sum_{k=1}^{[Nt]}a_k
$$
for some $\d\in[0,1)$, all runs as in the proof of Theorem \ref{clt}. One has just to take care about
the domain of $\{k_1,\ldots, k_{2n}\}$, appearing more complicated right now. A common domain can be achieved
after noticing that, for $\overline{s}<s$ and $t<\overline{t}$,
$$
\sum_{k=[Ns]+1}^{[Nt]}a_k=\sum_{k=[N\overline{s}]+1}^{[N\overline{t}]}a_k\chi_{\big[[Ns]+1, [Nt]\big]}(k)\,.
$$
If $j=n$, one has
\begin{align*}
&\om_{\Om}\big(S_{N, [s_1,t_1]}^{\eps(1)}(a,f_1)\cdots S_{N, [s_{2n},t_{2n}]}^{\eps(2n)}(a,f_{2n})\big) \\
=& \frac{1}{N^n}\sum_{k_{r_1},\ldots, k_{r_n}=[Ns]+1}^{[Nt]}G_\chi\bigg(\frac{k_{r_1}}{N},\ldots,\frac{k_{r_n}}{N}\bigg)+o\bigg(\frac{1}{N}\bigg)\,,
\end{align*}
where
$$
G_\chi(x_1,\ldots,x_n):=F(x_1,\ldots,x_n)\prod_{h=1}^n\bigg(\overline{f}_{l_h}\chi_{\left[\frac{[Ns_{l_h}]+1}{N}, \frac{[Nt_{l_h}]}{N}\right]}
f_{r_h}\chi_{\left[\frac{[Ns_{r_h}]+1}{N}, \frac{[Nt_{r_h}]}{N}\right]}\bigg)(x_h)
$$
and $s\leq\{s_{l_h}, s_{r_h}\}$, $t\geq\{t_{l_h}, t_{r_h}\}$, $h=1,\ldots,n$. Neglecting the last term above, we can split into the product of
$$
\frac{1}{N^n}\prod_{h=2}^n\sum_{k_{r_1}=[Ns]+1}^{[Nt]}\d_{r_1)}(r_h)\bigg(\overline{f}_{l_1}\chi_{\left[\frac{[Ns_{l_1}]+1}{N}, \frac{[Nt_{l_1}]}{N}\right]}
f_{r_1}\chi_{\left[\frac{[Ns_{r_1}]+1}{N}, \frac{[Nt_{r_1}]}{N}\right]}\bigg)\bigg(\frac{k_1}{N}\bigg)
$$
and
$$
\sum_{k_{r_2},\ldots, k_{r_n}=[Ns]+1}^{k_{r_1}
}G_\chi\bigg(\frac{k_{r_2}}{N},\ldots \frac{k_{r_n}}{N}\bigg)\,,
$$
as in \eqref{star333}.
Again, \eqref{conv2} and Proposition \ref{contmom} allow to complete straightforwardly the proof.
\end{proof}

\section{the anti-monotone case}
\label{sec4}

The structure of anti-Monotone Fock Space is obtained from the Monotone ones just reversing the order of the admissible sequences. More in detail, for $k\geq 1$, denote $I_k^-:=\{(i_1,i_2,\ldots,i_k)\subset\bn \mid i_1> i_2 > \cdots >i_k\}$, where for $k=0$ we take
$I^-_0:=\{\emptyset\}$, $\emptyset$ being the empty sequence. For each $k$, $\ch_k^-:=\ell^2(I_k^-)$ is the Hilbert space giving the $k$-particles
space and the $0$-particle space $\ch_0=\ell^2(\emptyset)$ is identified with the complex scalar field $\mathbb{C}$.
The discrete anti-Monotone Fock space is then defined as $\cf_m^-=\bigoplus_{k=0}^{\infty} \ch_k^-$.

Given a decreasing sequence $\a=(i_1,i_2,\ldots,i_k)$ of natural numbers, $e_{\a}$ is as usual the arbitrary element of
the canonical basis of $\cf_m^-$.

For $i\in\mathbb{N}$,
the anti-Monotone creation and annihilation operators are
\begin{align*}
&b^{\dagger}_ie_{\a}:=\left\{
\begin{array}{ll}
e_{(i,i_1,i_2,\ldots,i_k)} & \text{if}\, i> i_1\,, \\
0 & \text{otherwise}\,, \\
\end{array}
\right. \\
&b_i e_{\a}:=\left\{
\begin{array}{ll}
e_{(i_2,\ldots,i_k)} & \text{if}\, k\geq 1\,\,\,\,\,\, \text{and}\,\,\,\,\,\, i=i_1,\\
0 & \text{otherwise}\,,
\end{array}
\right.
\end{align*}
respectively. The are mutually adjoint with unital norm and satisfy
\begin{equation*}
\begin{array}{lll}
  b^{\dagger}_ib^{\dagger}_j=b_jb_i=0 & \text{if}\,\, i\leq j\,, \\
  b_ib^{\dagger}_j=0 & \text{if}\,\, i\neq j\,,\\
  b_ib^{\dagger}_i=I-\sum^{+\infty}_{k= i}b^{\dagger}_k b_k\,.
\end{array}
\end{equation*}
Similar to the Monotone setting, for $n\in\mathbb{N}$ one takes $J^n_-$ as the simplex given by the set of sequences
$(t_1>t_2>\cdots >t_n)$ of length $n$ from $J\subseteq \mathbb{R}$, with $J^0_-=\{\emptyset\}$.
If $\mu_n$ is the positive Lebesgue measure on $\mathbb{R}^n$ for any $n\geq 1$, we denote
$\gh_n^-$ the complex Hilbert space $L^2(J^n_-,\mu_n)$ . After taking $\mu_0$ as the Dirac unit mass on
$\emptyset$ and $\gh_0:=L^2(J^0_-,\mu_0)\equiv\mathbb{C}\Om$, $\Om$ being the vacuum vector, the continuous Monotone Fock space
is then achieved as $\Gamma^-_m:=\bigoplus_{n=0}^\infty \gh^-_n$. The inner product is
$$
\langle f, g \rangle_n=\d_{n,m}\int_{J^n_-}\overline{f(t_1,\ldots,t_n)}g(t_1,\ldots,t_n)\di t_1\cdots \di t_n,\,\,\,\, f\in\gh_n,g\in\gh_m\,.
$$
Creation and annihilation operators are
\begin{equation*}
[b^\dag(f)g](t,t_1,\ldots t_n):=\left\{
\begin{array}{ll}
f(t)g(t_1,\cdots t_n) & \text{if}\, t> t_1\,, \\
0 & \text{otherwise}\,, \\
\end{array}
\right.
\end{equation*}
\begin{equation*}
[b(f)g](t_1,\ldots t_n):=\left\{
\begin{array}{ll}
\int_{t>t_1}\overline{f(t)}g(t,t_1,\cdots t_n)\di t & \text{if}\,\,\, n\geq 1\,, \\
0 & \text{if}\,\,\, n=0\,.\\
\end{array}
\right.
\end{equation*}
We report the analogous of Lemmata \ref{lemm} and \ref{tocon}. Although their proof can be obtained after suitable modifications from \cite{Lu}, Lemmata 3.1 and 3.2, we put them below for the convenience of the reader.
\begin{lem}
\label{antilemm}
For $n\in\mathbb{N}$, $t\in[0,1]$, $f\in\gh^-$ and $G\in\gh_{n+1}^-$, one has
\begin{align}
\begin{split}
\label{antia}
[b(f)G](t_1,\ldots t_n)&=\int_0^1\overline{f(t)}G(t,t_1,\cdots t_n)\chi_{(t_1,1]}(t)\di t \\
&=\int_0^1\overline{f(t)}G(t,t_1,\cdots t_n)\chi_{[0,t)}(t_1)\di t\,.
\end{split}
\end{align}
If $G=g\prod_{h=1}^n g_k$ and $1\leq m\leq n$, then
\begin{align}
\begin{split}
\label{antib}
&b(f)b^\dag(g)b^\dag(g_1)\cdots b^\dag(g_n)\Om\\
=&\int_0^1\di t\overline{f(t)}g(t)b^\dag(g_1\chi_{[0,t)})\cdots b^\dag(g_m\chi_{[0,t)})b^\dag(g_{m+1})\cdots b^\dag(g_n)\Om \\
=&\int_0^1\di t\overline{f(t)}g(t)b^\dag(g_1\chi_{[0,t)})\cdots b^\dag(g_n\chi_{[0,t)})\Om\,.
\end{split}
\end{align}
\end{lem}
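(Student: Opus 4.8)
The plan is to derive \eqref{antia} directly from the defining formula for the anti-Monotone annihilator $b(f)$ on $\Gamma^-_m$, and then to obtain \eqref{antib} by inserting into \eqref{antia} the explicit form of an iterated string of anti-Monotone creators applied to $\Om$. The whole argument is the mirror image of the proof of Lemma \ref{lemm} (that is, \cite{Lu}, Lemma 3.1), with every inequality reversed, because the admissible sequences in $\Gamma^-_m$ are decreasing rather than increasing.

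For \eqref{antia}: given $G\in\gh_{n+1}^-$ and $(t_1,\ldots,t_n)\in J^n_-$, the definition gives $[b(f)G](t_1,\ldots,t_n)=\int_{t>t_1}\overline{f(t)}G(t,t_1,\ldots,t_n)\di t$, and since $J=[0,1]$ the integration domain is exactly $(t_1,1]$, which is the first equality. The second equality is the elementary observation that, as subsets of $[0,1]^2$, one has $\{(t,t_1):t_1<t\le 1\}=\{(t,t_1):0\le t_1<t\}$, hence $\chi_{(t_1,1]}(t)=\chi_{[0,t)}(t_1)$ almost everywhere and the two integrals coincide.

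For \eqref{antib}: first I would record that $b^\dag(g_n)\Om$ is the one‑particle vector $g_n$ and that prepending $b^\dag(g_{n-1}),\ldots,b^\dag(g_1),b^\dag(g)$ successively, each time via the defining formula for $b^\dag$ (which inserts a new first variable only when it exceeds the current one), produces the element of $\gh_{n+1}^-$ equal to $(s_0,s_1,\ldots,s_n)\mapsto g(s_0)\prod_{h=1}^n g_h(s_h)$ on the decreasing simplex $s_0>s_1>\cdots>s_n$ and $0$ elsewhere. Feeding this $G$ into \eqref{antia} gives, for $(t_1,\ldots,t_n)\in J^n_-$,
\[
[b(f)b^\dag(g)b^\dag(g_1)\cdots b^\dag(g_n)\Om](t_1,\ldots,t_n)
=\int_0^1\overline{f(t)}g(t)\Big(\prod_{h=1}^n g_h(t_h)\Big)\chi_{J^{n+1}_-}(t,t_1,\ldots,t_n)\,\di t.
\]
Now on $J^n_-$ one has $t_1>t_2>\cdots>t_n$, so $\chi_{J^{n+1}_-}(t,t_1,\ldots,t_n)=\chi_{[0,t)}(t_1)$ there; moreover when $\chi_{[0,t)}(t_1)=1$ automatically $\chi_{[0,t)}(t_h)=1$ for every $h$, while when $\chi_{[0,t)}(t_1)=0$ all the relevant products vanish. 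Hence, for every $1\le m\le n$, the map $(t_1,\ldots,t_n)\mapsto\chi_{[0,t)}(t_1)\prod_{h=1}^n g_h(t_h)$ agrees on $J^n_-$ with $(t_1,\ldots,t_n)\mapsto\prod_{h=1}^m(g_h\chi_{[0,t)})(t_h)\prod_{h=m+1}^n g_h(t_h)$, which by the previous step is exactly $[b^\dag(g_1\chi_{[0,t)})\cdots b^\dag(g_m\chi_{[0,t)})b^\dag(g_{m+1})\cdots b^\dag(g_n)\Om](t_1,\ldots,t_n)$; substituting this under the integral sign yields both displayed equalities of \eqref{antib}, the last one being the case $m=n$.

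I do not expect any genuine obstacle, since the statement is the reflection of \cite{Lu}, Lemma 3.1. The only point requiring care is the bookkeeping of the decreasing‑simplex supports: what replaces the monotone fact that $t_1$ is the \emph{smallest} coordinate is that in $J^n_-$ the coordinate $t_1$ is the \emph{largest}, and this is precisely what makes the cutoff $\chi_{[0,t)}$ on the first $m\le n$ creators interchangeable with the cutoff on all $n$ of them, and what turns the monotone cutoff $\chi_{(t,1]}(t_1)$ into the anti‑Monotone $\chi_{[0,t)}(t_1)$.
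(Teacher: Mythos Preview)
Your proposal is correct and follows essentially the same route as the paper. The only minor difference is that for \eqref{antia} you invoke the defining integral formula for $b(f)$ directly, whereas the paper re-derives it by expanding $\langle F,b(f)G\rangle_n=\langle b^\dag(f)F,G\rangle_{n+1}$ and using $\chi_{[0,1]^{n+1}_-}(t_0,\ldots,t_n)=\prod_{k=1}^n\chi_{(t_k,1]}(t_{k-1})$; since the annihilator was already defined by that very integral, your shortcut is the cleaner choice. For \eqref{antib} the paper simply declares it ``a particular case of \eqref{antia}'' without spelling out the simplex bookkeeping, which you supply explicitly and correctly.
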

\begin{proof}
Indeed, \eqref{antib} is a particular case of \eqref{antia}. The second equality in \eqref{antia} follows from the previous one, since $\chi_{(s,1]}(t)=\chi_{[0,t)}(s)$, for $s,t\in [0,1]$.
Moreover, as
$$
\chi_{[0,1]^{n+1}_-}(t_0,\ldots, t_n)=\prod_{k=1}^n \chi_{(t_k,1]}(t_{k-1}),\,\,\, (t_0,\ldots, t_n)\in [0,1]^{n+1}\,,
$$
for $F\in \gh_{n}^-$ one has
\begin{align*}
&\langle F, b(f)G\rangle_n=\langle b^\dag(f)F, G\rangle_{n+1}\\
=& \int_{[0,1]^{n+1}_-}\overline{f(t_0)} \overline{F(t_1,\ldots, t_n)}G(t_0,t_1,\ldots, t_n)\di t_0 \di^n\textbf{t} \\
=& \int_{[0,1]^{n+1}}\prod_{k=1}^n \chi_{(t_k,1]}(t_{k-1}) \overline{f(t_0)} \overline{F(t_1,\ldots, t_n)}G(t_0,t_1,\ldots, t_n)\di t_0 \di^n\textbf{t}\,,
\end{align*}
which is nothing else than
$$
\int_{[0,1]^{n}}\di^n\textbf{t}\prod_{k=2}^n \chi_{(t_k,1]}(t_{k-1})  \overline{F(t_1,\ldots, t_n)}\int_0^1\di t_0\overline{f(t_0)} G(t_0,t_1,\ldots, t_n)\chi_{(t_1,1]}(t_{0})\,.
$$
\end{proof}
\begin{lem}
\label{antitocon}
Let $n\in\mathbb{N}$, $\eps\equiv(l_h,r_h)_{h=1}^n$ be a non crossing connected pair partition on $\{1,\ldots,2n\}$ and $f_1,\ldots, f_{2n}\in\gh$. Then
\begin{align}
\begin{split}
\label{conn1a}
&\om_\Om\big(b^{\varepsilon(1)}(f_1)\cdots b^{\varepsilon(2n)}(f_{2n})\big) \\
=&\int_0^1 \om_\Om\big(b^{\varepsilon(2)}(f_2\chi_{(t,1]})\cdots b^{\varepsilon(2n-1)}(f_{2n-1}\chi_{(t,1]})\big)\overline{f_1(t)}f_{2n}(t)\di t\,.
\end{split}
\end{align}
\end{lem}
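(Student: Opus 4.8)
The plan is to repeat, with the order on $[0,1]$ reversed, the proof of the monotone statement (Lemma \ref{tocon}, i.e. Lemma 3.2 of \cite{Lu}), with Lemma \ref{antilemm} now playing the role there played by Lemma \ref{lemm}.

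First I would exploit connectedness. Since $(l_h,r_h)_{h=1}^n$ is connected, $l_1=1$ and $r_1=2n$, hence $\eps(1)=-1$ and $\eps(2n)=1$: the outermost operators in the product are $b(f_1)$ on the left and $b^\dag(f_{2n})$ on the right. Put $W:=b^{\eps(2)}(f_2)\cdots b^{\eps(2n-1)}(f_{2n-1})$. Using $b(f_1)^*=b^\dag(f_1)$ and identifying $b^\dag(f)\Om$ with $f$ inside $\gh_1^-=L^2([0,1])$, one gets
\begin{equation*}
\om_\Om\big(b^{\eps(1)}(f_1)\cdots b^{\eps(2n)}(f_{2n})\big)=\big\langle f_1,\,Wf_{2n}\big\rangle_{\gh_1^-}=\int_0^1\overline{f_1(t)}\,[Wf_{2n}](t)\,\di t .
\end{equation*}
Here $Wf_{2n}\in\gh_1^-$, because by non-crossingness the blocks $\{(l_h,r_h)\}_{h=2}^n$ form a non-crossing pair partition of $\{2,\ldots,2n-1\}$, so $(\eps(2),\ldots,\eps(2n-1))$ is a Dyck word and $W$ preserves the particle number.

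The heart of the matter is the identity
\begin{equation*}
[Wf_{2n}](t)=f_{2n}(t)\;\om_\Om\big(b^{\eps(2)}(f_2\chi_{(t,1]})\cdots b^{\eps(2n-1)}(f_{2n-1}\chi_{(t,1]})\big)\qquad\text{for a.e. }t\in[0,1].
\end{equation*}
The mechanism behind it is that the coordinate carried by $f_{2n}$ is inert under $W$: an anti-monotone creator prepends a coordinate strictly larger than the current maximum while an anti-monotone annihilator deletes the current maximum, so after $b^\dag(f_{2n})$ has produced the coordinate $t$, this remains the minimum of every strictly decreasing admissible string occurring while $W$ acts, it is never annihilated, and every further coordinate introduced by $W$ exceeds $t$. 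Thus $f_{2n}(t)$ factors out of $Wf_{2n}$, each test function $f_j$ ($2\le j\le 2n-1$) is effectively evaluated only on $(t,1]$, and what is left over is $W$ acting on the vacuum with the test functions replaced by $f_j\chi_{(t,1]}$, i.e. the scalar on the right-hand side. I would make this rigorous by induction on $n$ (the case $n=1$ being immediate), using Lemma \ref{antilemm} to peel off from $W$ its rightmost maximal block — note that $\eps(2n-1)=1$ is forced, since the partner of position $2n-1$ would otherwise have to exceed $2n-1$ — and treating each connected component of the partition induced on $\{2,\ldots,2n-1\}$ separately, exactly as in the monotone proof.

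Substituting the second display into the first one gives \eqref{conn1a}. I expect the inductive verification of the core identity to be the only genuine work: one has to follow which coordinate is the running maximum throughout the action of $W$ and check that the ``smaller than'' truncations generated along the way by \eqref{antib} of Lemma \ref{antilemm} collapse into the single truncation $\chi_{(t,1]}$ of the final vacuum expectation — the anti-monotone counterpart of the bookkeeping in Lu's argument.
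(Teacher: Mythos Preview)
Your proposal is correct and follows essentially the same route as the paper: induction on $n$ driven by Lemma \ref{antilemm}. The paper organises the induction slightly differently --- it inducts directly on \eqref{conn1a} by peeling off the \emph{innermost} pair $(l_n,r_n)=(l_n,l_n+1)$ via \eqref{antib} and then applying the hypothesis to the connected partition left on $\{1,\ldots,2n\}\setminus\{l_n,r_n\}$ --- whereas you first strip off the outermost pair $(1,2n)$ by rewriting the expectation as $\langle f_1, Wf_{2n}\rangle$ and induct on your ``core identity'' for $[Wf_{2n}](t)$. The underlying mechanism (the inert bottom coordinate forcing every later variable into $(t,1]$) is the same in both arguments.
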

\begin{proof}
The thesis follows from an induction procedure on $n$. The case $n=1$ is trivial.
For $n=2$, the unique non crossing connected pair partition is $\pi=((1,4), (2,3))$. The definition of the anti-Monotone annihilator gives
\begin{align*}
\om_\Om \big(b(f_1)b(f_2) b^\dag(f_{3}&s)b^\dag(f_{4})\big)
=\int_{s>t}\overline{f_1(t)}f_4(t)\overline{f_2(s)}f_3(s)\di s  \di t \\
=&\int_0^1\di t \overline{f_1(t)}f_4(t)\int_0^1 \di s\overline{f_2(s)}f_3(s) \chi_{(t,1]}(s) \\
=& \int_0^1 \om_\Om\big(a(f_2\chi_{(t,1]})a^\dag(f_{3}\chi_{(t,1]})\big) \overline{f_1(t)}f_{4}(t)\di t\,.
\end{align*}
We turn to general $n\geq 3$. The connected property gives $l_n\neq 2n-1$, and the non crossing condition entails $r_n=l_n+1$. Thus,
\eqref{antib} yields
\begin{align*}
&b(f_{l_n})b^\dag(f_{l_n+1})\cdots b^\dag(f_{2n})\Om \\
=& \int_0^1 \di t \overline{f_{l_n}(t)}f_{l_n+1}(t)b^\dag(f_{l_n+2}\chi_{[0,t)})\cdots b^\dag(f_{2n}\chi_{[0,t)})\Om\,.
\end{align*}
Consequently, the l.h.s. of \eqref{conn1a} becomes
\begin{equation*}
\int_0^1 \overline{f_{l_n}(t)}f_{l_n+1}(t) \om_\Om \bigg(\prod_{h=1}^{l_n-1}b^{\varepsilon(h)}(f_{h}) \prod_{k=l_n+2}^{2n}b^\dag(f_{k}\chi_{[0,t)})\bigg) \di t\,.
\end{equation*}
Notice that $(l_h,r_h)_{h=1}^{n-1}$ is a connected non crossing pair partition on $\{1,\ldots,2n\}\backslash\{l_n\,r_n\}$,
hence the induction assumption gives
\begin{align*}
&\om_\Om \bigg(\prod_{h=1}^{l_n-1}b^{\varepsilon(h)}(f_{h}) \prod_{k=l_n+2}^{2n}b^\dag(f_{k}\chi_{[0,t)})\bigg)\\
=&\int_0^1\om_\Om \bigg(\prod_{h=2}^{l_n-1}b^{\varepsilon(h)}(f_{h}\chi_{(s,1]})) \prod_{k=l_n+2}^{2n-1}b^\dag(f_{k}\chi_{(s,1]}\chi_{(t,1]})\bigg)\overline{f_1(s)}(f_{2n}\chi_{[0,t)})(s) \di s
\end{align*}
For
$$
\mathbf{C}(s,t):=\prod_{h=2}^{l_n-1}b^{\varepsilon(h)}(f_{h}\chi_{(s,1]}) \prod_{k=l_n+2}^{2n-1}b^\dag(f_{k}\chi_{(s,1]}\chi_{[0,t)})\,,
$$
as $\chi_{(s,1]}(t)=\chi_{[0,t)}(s)$, the l.h.s. of \eqref{conn1a} becomes
\begin{align*}
&\int_0^1 \di s \overline{f_1(s)}f_{2n}(s) \int_0^1\di t\om_\Om(\mathbf{C}(s,t))
\overline{f_{l_n}(t)}(f_{l_n+1}\chi_{(s,1]})(t)  \\
=&\int_0^1\overline{f_1(s)}f_{2n}(s)\om_\Om\big(b^{\varepsilon(2)}(f_2\chi_{(s,1]})\cdots b^{\varepsilon(2n-1)}(f_{2n-1}\chi_{(s,1]})\big) \di s\,,
\end{align*}
where the last equality coming from \eqref{antib}.
\end{proof}
Proposition \ref{contmom} here assumes the following form:
\begin{prop}
\label{anticontmom}
Let $n\in\mathbb{N}$, $(l_h,r_h)_{h=1}^n\in NCPP(2n)$ and $f_1,\ldots, f_{2n}\in\gh$. Then
\begin{align*}
&\om_\Om\big(b^{\varepsilon(1)}(f_1)\cdots b^{\varepsilon(2n)}(f_{2n})\big) \\
=&\int_{J^n}\prod_{1\leq h<m\leq n}\nabla_{t_h,t_m}(r_h,r_m)
\prod_{h=1}^n\overline{f_{l_h}(t_h)}f_{r_h}(t_h)\di t_h\,,
\end{align*}
where
$$
\nabla_{t_h,t_m}(r_h,r_m):=\d_{r_m)}(r_h)+\d_{r_h)}(r_m)\chi_{(t_h,1]}(t_m)\,.
$$
\end{prop}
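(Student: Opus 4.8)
The plan is to mirror the proof of Proposition \ref{contmom} verbatim, replacing each appeal to a ``Monotone'' ingredient by its ``anti-Monotone'' counterpart established above. More precisely, I would argue by induction on $n$, the case $n=1$ being immediate from the definition of $b(f)$ (or from Lemma \ref{antitocon} with $n=1$). Assume the formula holds for every $k<n$. Since $(l_h,r_h)_{h=1}^n\in NCPP(2n)$, the non crossing property forces $r_1=2j$ for some $j\le n$, and the factorisation rule for vacuum mixed moments in interacting Fock spaces (as recalled in the proof of Proposition \ref{contmom}) gives
\begin{align*}
&\om_\Om\big(b^{\varepsilon(1)}(f_1)\cdots b^{\varepsilon(2n)}(f_{2n})\big) \\
=&\om_\Om\big(b^{\varepsilon(1)}(f_1)\cdots b^{\varepsilon(2j)}(f_{2j})\big)\,\om_\Om\big(b^{\varepsilon(2j+1)}(f_{2j+1})\cdots b^{\varepsilon(2n)}(f_{2n})\big)\,.
\end{align*}

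For $j<n$, the two factors are governed by the non crossing pair partitions on $\{1,\dots,2j\}$ and on $\{2j+1,\dots,2n\}$ respectively, so applying the induction hypothesis to each and using the elementary identity
$$
\prod_{1\leq h<m\leq j}\nabla_{t_h,t_m}(r_h,r_m)\prod_{j+1\leq h<m\leq n}\nabla_{t_h,t_m}(r_h,r_m)=\prod_{1\leq h<m\leq n}\nabla_{t_h,t_m}(r_h,r_m)
$$
(valid because every cross term $\nabla_{t_h,t_m}$ with $h\le j<m$ equals $\d_{r_m)}(r_h)=1$) yields the claim. For $j=n$, i.e.\ when $(l_h,r_h)_{h=1}^n$ is connected, I invoke Lemma \ref{antitocon} to peel off the outer pair $(1,2n)$:
$$
\om_\Om\big(b^{\varepsilon(1)}(f_1)\cdots b^{\varepsilon(2n)}(f_{2n})\big)=\int_0^1\om_\Om\big(b^{\varepsilon(2)}(f_2\chi_{(t,1]})\cdots b^{\varepsilon(2n-1)}(f_{2n-1}\chi_{(t,1]})\big)\overline{f_1(t)}f_{2n}(t)\di t\,.
$$
Now $\{(l_h,r_h)\}_{h=2}^n$ is a non crossing pair partition of $\{2,\dots,2n-1\}$, so the induction hypothesis applies to the inner expectation; substituting $f_{r_h}\chi_{(t,1]}$ for $f_{r_h}$ produces the factor $\chi_{(t,1]}(t_h)$ for $h=2,\dots,n$, and since $\d_{r_1)}(r_h)=1$ for all such $h$ one rewrites $\prod_{h=2}^n\chi_{(t,1]}(t_h)\prod_{2\le h<m\le n}\nabla_{t_h,t_m}(r_h,r_m)=\prod_{1\le h<m\le n}\nabla_{t_h,t_m}(r_h,r_m)$, which is exactly the asserted integrand.

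The only genuine check beyond bookkeeping is the base case of the ``connected'' branch, and that the interacting-Fock-space factorisation genuinely decomposes a non-connected non crossing pair partition along $r_1=2j$ into a connected piece and a shorter non crossing piece; both are already used in Proposition \ref{contmom} and transfer unchanged since the anti-Monotone Fock space is likewise realised as an IFS (the defining relations for $b_i,b^\dag_i$ differ only by reversing the order, which is precisely what turns $\chi_{[0,t)}$ into $\chi_{(t,1]}$). I do not expect any real obstacle: the argument is a line-by-line translation, with $\chi_{[0,t_h)}(t_m)$ systematically replaced by $\chi_{(t_h,1]}(t_m)$ throughout. For this reason I would simply write ``The proof is verbatim that of Proposition \ref{contmom}, using Lemma \ref{antitocon} in place of Lemma \ref{tocon} and replacing the cut-off $\chi_{[0,t)}$ by $\chi_{(t,1]}$ everywhere,'' and optionally record the $n=1,2$ computations for completeness.
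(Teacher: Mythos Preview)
Your proposal is correct and follows exactly the approach the paper itself indicates: the paper states that the proof ``runs as the Monotone counterpart,'' invoking Lemma \ref{antitocon} in place of Lemma \ref{tocon} and recording the identity
$$
\prod_{1\leq h<m\leq n}\nabla_{t_h,t_m}(r_h,r_m)=\prod_{h=2}^n\d_{r_1)}(r_h)\chi_{(t_1,1]}(t_h)\prod_{2\leq h<m\leq n}\nabla_{t_h,t_m}(r_h,r_m)\,,
$$
which is precisely the rewriting you use in the connected case. Your induction scheme (base case, split at $r_1=2j$, factorisation for $j<n$, peel-off via Lemma \ref{antitocon} for $j=n$) is the line-by-line translation of the proof of Proposition \ref{contmom}, so there is no divergence in method.
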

The proof runs as the Monotone counterpart. Here, one has to exploit Lemma \ref{antitocon} and the identity
$$
\prod_{1\leq h<m\leq n}\nabla_{t_h,t_m}(r_h,r_m)=\prod_{h=2}^n\d_{r_1)}(r_h)\chi_{(t_1,1]}(t_h)\prod_{2\leq h<m\leq n}\nabla_{t_h,t_m}(r_h,r_m)\,.
$$
Concerning the anti-Monotone discrete case, Lemma \ref{mixmom1} can be rephrased in the following way:
\begin{lem}
\label{amixmom1}
For any $n\in\mathbb{N}$, $\eps\equiv (l_h,r_h)_{h=1}^n\in \{-1,1\}^{2n}_+$ and $k\in \mathfrak{M}_p((l_h,r_h))$, one has
\begin{equation*}
\om_\Om\big(b^{\eps(1)}_{k_1}\cdots b^{\eps(2n)}_{k_{2n}}\big)=\prod_{1\leq h<m\leq n} \nabla_{k_{r_h},k_{r_m}}(r_h,r_m)\,,
\end{equation*}
where
$$
\nabla_{k_{r_h},k_{r_m}}(r_h,r_m):=\d_{r_m)}(r_h)+ \d_{r_h)}(r_m)\d_{k_{r_m})}(k_{r_h})\,.
$$
\end{lem}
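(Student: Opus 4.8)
The plan is to transcribe, essentially line by line, the proof of Lemma \ref{mixmom1}, replacing everywhere the monotone operators $a_i,a^\dag_i$ by the anti-monotone ones $b_i,b^\dag_i$ and the monotone relations \eqref{comrul} by the anti-monotone commutation relations recalled at the beginning of the present section. The guiding observation is that the anti-monotone Fock space is the mirror image of the monotone one under reversal of the order on $\bn$; accordingly, every comparison of the form $k_{r_h}>k_{r_m}$ occurring in the monotone computation is replaced by the opposite one $k_{r_m}>k_{r_h}$, and this is precisely the difference between $\Delta_{k_{r_h},k_{r_m}}(r_h,r_m)$ and $\nabla_{k_{r_h},k_{r_m}}(r_h,r_m)$. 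The first step is to establish the anti-monotone analogue of \eqref{chi1}: for a connected non crossing pair partition $(l_h,r_h)_{h=1}^p$ and $k\in\mathfrak{M}_p((l_h,r_h)_{h=1}^p)$,
\[
b_{k_{l_1}}\cdots b^\dag_{k_{r_1}}=\prod_{1\leq h<m\leq p}\nabla_{k_{r_h},k_{r_m}}(r_h,r_m)\,b_{k_{r_1}}b^\dag_{k_{r_1}}\,.
\]

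For $p=2$ the left-hand side is $b_{k_{l_1}}b_{k_{l_2}}b^\dag_{k_{r_2}}b^\dag_{k_{r_1}}$; since $b^\dag_{k_{r_2}}b^\dag_{k_{r_1}}$ is nonzero only if $k_{r_2}>k_{r_1}$, and $b_{k_{l_2}}b^\dag_{k_{r_2}}=b_{k_{r_2}}b^\dag_{k_{r_2}}=I-\sum_{j\geq k_{r_2}}b^\dag_j b_j$ (using $k_{l_2}=k_{r_2}$), an application of $b_jb^\dag_{k_{r_1}}=0$ for $j\geq k_{r_2}>k_{r_1}$ collapses the expression to $\d_{k_{r_2})}(k_{r_1})\,b_{k_{r_1}}b^\dag_{k_{r_1}}$, which, since $r_1>r_2$ here, equals $\nabla_{k_{r_1},k_{r_2}}(r_1,r_2)\,b_{k_{r_1}}b^\dag_{k_{r_1}}$; this is the anti-monotone counterpart of \eqref{equation1}--\eqref{equation2}, obtained by the same manipulations, and it is exactly here that the mirrored relation $b_ib^\dag_i=I-\sum_{j\geq i}b^\dag_jb_j$ forces the comparison to point the other way. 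For $p>2$, as in the monotone argument one singles out the largest index $m$ with $\eps(r_m-1)=-1$ and $\eps(r_m)=\eps(r_m+1)=1$; connectedness guarantees that only creators appear to the right of $b^\dag_{k_{r_m}}$, while the non crossing condition gives $l_m=r_m-1$ and $r_m+1=r_q$ for some $q<m$. Applying the reduction just described to $b_{k_{l_m}}b^\dag_{k_{r_m}}b^\dag_{k_{r_q}}\cdots b^\dag_{k_{r_1}}$ eliminates the block $(l_m,r_m)$ at the cost of the factor $\prod_{h\neq m}\nabla_{k_{r_h},k_{r_m}}(r_h,r_m)$ — the strict monotonicity of the surviving creator indices, guaranteed by the anti-monotone form of \eqref{comrul}, being used exactly as in Lemma \ref{mixmom1} — and one concludes by induction on $p$. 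The multiplicativity identity \eqref{chi2} holds verbatim with $\nabla$ in place of $\Delta$, and \eqref{chi3} has the obvious anti-monotone form $b_{k_{l_1}}b^\dag_{k_{r_1}}\cdots b_{k_{l_p}}b^\dag_{k_{r_p}}=b_{k_{r_m}}b^\dag_{k_{r_m}}$ with $k_{r_m}=\max\{k_{r_1},\ldots,k_{r_p}\}$.

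Finally, for a general $\eps\in\{-1,1\}^{2n}_+$ and $k\in\mathfrak{M}_p((l_h,r_h)_{h=1}^n)$, the factorisation property of vacuum mixed moments in interacting Fock spaces (invoked already in the proof of Lemma \ref{mixmom1}) splits $\om_\Om\big(b^{\eps(1)}_{k_1}\cdots b^{\eps(2n)}_{k_{2n}}\big)$ into a product of vacuum expectations each carrying a connected non crossing pair partition; applying the anti-monotone versions of \eqref{chi1}--\eqref{chi3} and reassembling through the anti-monotone \eqref{chi2} delivers $\prod_{1\leq h<m\leq n}\nabla_{k_{r_h},k_{r_m}}(r_h,r_m)$, which is the claim. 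The one genuinely delicate point — and the step I would check most carefully — is the bookkeeping of the index comparisons in the inductive elimination of pairs: one must verify that the constraints accumulated along the way combine to yield precisely $\d_{k_{r_m})}(k_{r_h})$, and not $\d_{k_{r_h})}(k_{r_m})$, in each surviving factor; this rests entirely on the orientation of the relation $b_ib^\dag_i=I-\sum_{j\geq i}b^\dag_jb_j$, and once it is traced through, the remainder of the argument is a mechanical transcription of the monotone proof.
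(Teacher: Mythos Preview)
Your proposal is correct and follows essentially the same approach as the paper, which merely indicates that the proof mirrors that of Lemma \ref{mixmom1} using the anti-monotone analogue of \eqref{chi2} together with the anti-monotone version of \cite{CFL}, Lemma 5.4. One small slip: in the anti-monotone form of \eqref{chi3} the surviving index should be $k_{r_m}=\min\{k_{r_1},\ldots,k_{r_p}\}$, not $\max$ --- this follows from $b_jb^\dag_jb_kb^\dag_k=b_kb^\dag_k$ for $j\geq k$, which is the content of the relation $b_jb^\dag_jb_k=b_k$ listed in the paper --- but the argument is unaffected since $\om_\Om(b_lb^\dag_l)=1$ for every $l$.
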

For the proof, very similar to that of the "twin" result in the previous section, one has just to use the analogue of \eqref{chi2}, that is
\begin{align*}
&\prod_{1\leq h<m\leq q}\nabla_{{k_{r_h}},k_{r_m}}(r_h,r_m)\\
=&\prod_{1\leq h<m\leq p}\nabla_{{k_{r_h}},k_{r_m}}(r_h,r_m)
\prod_{p+1\leq i<j\leq q}\nabla_{{k_{r_i}},k_{r_j}}(r_i,r_j)\,,
\end{align*}
when $p<q$, and the anti-Monotone version of \cite{CFL}, Lemma 5.4
$$
\begin{array}{ll}
  b_kb_jb^\dag_j=\d_{j)}(k)b_k, & b_jb^\dag_jb^\dag_k=\d_{j)}(k)b^\dag_k, \\
  b_jb^\dag_jb_k=b_k, & b^\dag_kb_jb^\dag_j=b^\dag_k,
\end{array}
$$
where the last two equalities hold for $j\geq k$.

The CLT deals, as in the previous section, with the convergence of vacuum mixed expectations for sums of type
$$
S_N^\eps(b,f):=\frac{1}{\sqrt{N}}
\sum_{k=1}^N b_{k}^{\eps}f\bigg(\frac{k}{N}\bigg),\,\,\, N=1,2\dots
$$
with $f\in\mathcal{R}^\infty([0,1])$.
The anti-Monotone CLT can be proven with the help of the above results, \eqref{conv1} in Lemma \ref{means} and using the same arguments exposed in the proof of Theorem \ref{clt}. We leave the details of the proof to the reader.
\begin{thm}
\label{cltbis}
Let $N\in\mathbb{N}$ and $\eps(1),\ldots,\eps(m)\in\{-1,1\}$. Then, for each $m\in\mathbb{N}$ and $f_1,\ldots, f_m\in \mathcal{R}^\infty([0,1])$,
$$
\lim_{N\rightarrow\infty}\om_\Om(S_N^{\eps(1)}(b,f_1)\cdots S_N^{\eps(m)}(b,f_m))
$$
vanishes for $m$ odd and, for $m=2n$ is equal to
$$
\om_\Om(b^{\varepsilon(1)}(f_1)\cdots b^{\varepsilon(2n)}(f_{2n}))\,.
$$
\end{thm}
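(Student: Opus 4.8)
The plan is to mirror, \emph{mutatis mutandis}, the proof of Theorem \ref{clt}, replacing the monotone ingredients by their anti-monotone counterparts throughout. First I would expand each $S_N^{\eps(i)}(b,f_i)$ into a sum over indices $k_1,\dots,k_m\in\{1,\dots,N\}$, collecting the test functions into the conjugated/unconjugated shorthand exactly as in the proof of Lemma \ref{ppf}. The anti-monotone commutation relations force $\om_\Om\big(b^{\eps(1)}_{k_1}\cdots b^{\eps(m)}_{k_m}\big)=0$ unless $m$ is even, say $m=2n$, and $\eps\in\{-1,1\}^{2n}_+$ is a Dyck word; moreover, using the anti-monotone version of \cite{CFL}, Lemma 5.4 displayed just above, one argues exactly as in cases 1), 2), 2a), 2b) of Lemma \ref{ppf} that the only surviving contributions come from $k\in\mathfrak{M}_p((l_h,r_h)_{h=1}^n,N)$, where $(l_h,r_h)_{h=1}^n$ is the non crossing pair partition attached to $\eps$. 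This reduces the vacuum mixed moment to
\begin{equation*}
\frac{1}{N^n}\sum_{k\in\mathfrak{M}_p((l_h,r_h)_{h=1}^n,N)}\om_\Om\big(b^{\eps(1)}_{k_1}\cdots b^{\eps(2n)}_{k_{2n}}\big)\prod_{h=1}^n\overline{f_{l_h}\!\Big(\tfrac{k_{r_h}}{N}\Big)}f_{r_h}\!\Big(\tfrac{k_{r_h}}{N}\Big)\,.
\end{equation*}

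Next I would invoke Lemma \ref{amixmom1} to replace $\om_\Om\big(b^{\eps(1)}_{k_1}\cdots b^{\eps(2n)}_{k_{2n}}\big)$ by $\prod_{1\le h<m\le n}\nabla_{k_{r_h},k_{r_m}}(r_h,r_m)$, so that the quantity above becomes
\begin{equation*}
\frac{1}{N^n}\sum_{k\in\mathfrak{M}_p((l_h,r_h)_{h=1}^n,N)}\widetilde G\Big(\tfrac{k_{r_1}}{N},\dots,\tfrac{k_{r_n}}{N}\Big),\qquad \widetilde G(t_1,\dots,t_n):=\prod_{1\le h<m\le n}\nabla_{t_h,t_m}(r_h,r_m)\prod_{h=1}^n\overline{f_{l_h}(t_h)}f_{r_h}(t_h),
\end{equation*}
after remarking, as in Theorem \ref{clt}, that the combinatorial factor $\prod\nabla_{t_h,t_m}(r_h,r_m)$ is invariant under common rescaling $t_h\mapsto ct_h$, so the Kronecker $\d_{k_{r_h})}(k_{r_m})$ can be written as $\d_{t_h)}(t_m)$ evaluated at $t_h=k_{r_h}/N$. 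Then I would show the restriction to the injective-on-$r$-indices set $\mathfrak{M}_p((l_h,r_h)_{h=1}^n,N)$ costs only an $o(1/N)$ error (the "diagonal" contributions $V_j$, $j\ge 2$, are $O(N^{n-1})$ and bounded by $\binom{n}{j}M^{2n}$ in the same way), reducing the problem to the convergence of the full Riemann sum $N^{-n}\sum_{k_{r_1},\dots,k_{r_n}=1}^N\widetilde G(k_{r_1}/N,\dots,k_{r_n}/N)$ to $\int_{[0,1]^n}\widetilde G$.

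Finally I would prove this convergence by induction on $n$, splitting according to $r_1=2j$: for $j<n$ the partition factorises, $\prod_{1\le h<m\le n}\nabla=\big[\prod_{1\le h<m\le j}\nabla\big]\big[\prod_{j+1\le h<m\le n}\nabla\big]$, and the induction hypothesis applies to each factor; for $j=n$ (connected case) one uses $\widetilde F(k_{r_1}/N,\dots)=\widetilde F(k_{r_2}/N,\dots)\prod_{h=2}^n\d_{r_1)}(r_h)\d_{k_{r_1})}(k_{r_h})$ to peel off the outer pair, which turns the inner sum into one over $k_{r_2},\dots,k_{r_n}\le k_{r_1}$, and then \eqref{conv1} of Lemma \ref{means} (rather than \eqref{conv2}, since the anti-monotone order reverses the simplex) identifies the limit with $\int_0^1 \overline{f_{l_1}(t_1)}f_{r_1}(t_1)\big[\int_{(t_1,1]^{n-1}}(\cdots)\big]\di t_1$. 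Matching this against Proposition \ref{anticontmom} — which expresses $\om_\Om\big(b^{\eps(1)}(f_1)\cdots b^{\eps(2n)}(f_{2n})\big)$ through the very same $\nabla_{t_h,t_m}(r_h,r_m)=\d_{r_m)}(r_h)+\d_{r_h)}(r_m)\chi_{(t_h,1]}(t_m)$ — closes the induction. The only genuine obstacle, as in Theorem \ref{clt}, is bookkeeping: making sure the "peeling" of the first block in the connected case produces exactly the integration domain $(t_1,1]^{n-1}$ that \eqref{conv1} supplies, and that $\nabla$ (not $\Delta$) appears everywhere; everything else is a routine transcription of the monotone argument with $\chi_{[0,t)}\leftrightarrow\chi_{(t,1]}$ and $\d_{k_{r_h})}(k_{r_m})\leftrightarrow\d_{k_{r_m})}(k_{r_h})$ swapped. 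Hence we omit the details.
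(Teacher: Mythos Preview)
Your proposal is correct and matches the paper's own approach exactly: the paper simply states that the anti-monotone CLT follows from the preceding anti-monotone lemmata, \eqref{conv1} in Lemma \ref{means}, and the same arguments as in Theorem \ref{clt}, leaving the details to the reader. One bookkeeping slip to correct: in the connected case the peeled factor should be $\prod_{h=2}^n\d_{r_1)}(r_h)\,\d_{k_{r_h})}(k_{r_1})$ (not $\d_{k_{r_1})}(k_{r_h})$), so the inner sum runs over $k_{r_2},\dots,k_{r_n}\ge k_{r_1}$ --- this is exactly the range that \eqref{conv1} treats and what your (correct) limit $\int_{(t_1,1]^{n-1}}$ demands; as written, your displayed formula and the phrase ``$\le k_{r_1}$'' contradict your own choice of \eqref{conv1}.
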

One finally notices that the anti-Monotone form of Corollary \ref{cor}, giving again the weak convergence of normalised sums of position operators to the arcsine law, as well as the invariance principle, can be easily stated as well.

\bigskip

\textbf{Acknowledgements.} The authors kindly acknowledge the support of the italian INDAM-GNAMPA.

\newpage

\end{document}